\newcommand{\R}{\mathbb{R}}
\newcommand{\N}{\mathbb{N}}
\newcommand{\Z}{\mathbb{Z}}
\newcommand{\eps}{\varepsilon}
\newcommand{\fhi}{\varphi}
\newcommand{\weak}{\rightharpoonup}
\newcommand{\weakto}{\rightharpoonup}
\newcommand{\weakstar}{\stackrel{\ast}{\rightharpoonup}}
\newcommand{\mean}{-\hspace{-1.15em}\int}
\newcommand{\del}{\partial}
\newcommand{\supp}{\mathrm{supp}}
\newcommand{\one}{\mathrm{\bf 1}}
\def\calK{\mathcal{K}}
\def\calL{\mathcal{L}}
\def\calM{\mathcal{M}}
\def\calL{\mathcal{L}}
\newtheorem{theorem}{Theorem}[section]
\newtheorem{lemma}[theorem]{Lemma}
\newtheorem{proposition}[theorem]{Proposition}
\newtheorem{assumption}[theorem]{Assumption}
\newtheorem{remark}[theorem]{Remark}
\numberwithin{equation}{section}
\begin{document}

\thispagestyle{empty}
\begin{center}
  ~\vskip7mm {\Large\bf Sound absorption by perforated walls\\[3mm] along
    boundaries
  }\\[8mm]

  {\large Patrizia Donato\footnotemark[1],
    Agnes Lamacz\footnotemark[2], and Ben Schweizer\footnotemark[3]}\\[5mm]


\end{center}

\footnotetext[1]{Laboratoire de Math\'ematiques, Avenue de
  l'Universit\'e BP.12, F-76801 Saint-\'Etienne-du-Rouvray, \tt
  patrizia.donato@univ-rouen.fr}

\footnotetext[2]{Fakult\"at f\"ur Mathematik, U Duisburg-Essen,
  Thea-Leymann-Stra\ss e 9, D-45127 Essen, \tt agnes.lamacz@uni-due.de}

\footnotetext[3]{Fakult\"at f\"ur Mathematik, TU Dortmund,
  Vogelspothsweg 87, D-44227 Dortmund, \tt ben.schweizer@tu-dortmund.de}

\pagestyle{myheadings} \markboth{Sound absorption by perforated walls
  along boundaries}{A. Lamacz and B. Schweizer}

\begin{center}
   \begin{minipage}[c]{0.85\textwidth}
     {\bf Abstract:} We analyze the Helmholtz equation in a complex
     domain. A sound absorbing structure at a part of the boundary
     is modelled by a periodic geometry with periodicity $\eps>0$.  A
     resonator volume of thickness $\eps$ is connected with thin
     channels (opening $\eps^3$) with the main part of the macroscopic
     domain. For this problem with three different scales we analyze
     solutions in the limit $\eps\to 0$ and find that the effective
     system can
     describe sound absorption.\\[-1mm]

    {\bf MSC:} 35B27, 78M40\\[-1mm]

    {\bf Keywords:} Helmholtz equation, sound absorbers,
    homogenization, complex domain
   \end{minipage}\\[2mm]
\end{center}


\section{Introduction}

We are interested in the mathematical analysis of a sound absorbing
structure, e.g., along the wall of a room. The sound absorber consists
of a combination of small-scale structures. For the simplest setting
one should think of a wooden plate that is attached to the wall. The
plate is attached in such a way that a thin gap remains between plate
and wall. To create the sound absorption effect, little holes are
drilled in the wood to connect the room with the thin volume behind
the plate.

In order to analyze the effects of such a structure, we define a
geometry with different small scales: The wood is modelled by a layer
of thickness $\eps>0$, the (resonator) volume behind the wood has also
a thickness of order $\eps$, the holes are distributed periodically
with periodicity $\eps$. The width of the holes is assumed to be of
order $\eps^3$; this is the scaling in which a nontrivial limit
behavior is observed. We study the Helmholtz equation in the domain
that is filled with air, using homogeneous Neumann conditions along
all boundaries. Denoting solutions by $u^\eps$, we are interested in
the behavior of $u^\eps$ in the limit $\eps\to 0$. We find two
effective systems; they describe sound waves in the volume with the
sound absorbing structure.

We derive {\em two} limit systems, since the lowest order
approximation is trivial. At order $\eps^0$, the limit problem
coincides with the original Helmholtz problem: The small structures
along the boundary have no effect. In this sense, the complex geometry
can only lead to an effect of order $\eps$. We derive the effective
system for this $O(\eps)$ deviation in Theorem \ref {thm:mainTheorem}
below. Due to $L^2$-unboundedness of relevant functions, the proof is
performed with $L^1$-based function spaces and limit measures for
pressure and flux quantities.

The interesting question from the modelling perspective is: Why can
the $O(\eps)$ deviation be relevant for sound absorption? We see the
answer in the effective equation of Theorem \ref {thm:mainTheorem}:
The effective system contains the quantity $(\alpha/(LV)) - \omega^2$,
where $\alpha, L, V$ are geometric quantities, and $\omega$ is the
frequency. When the frequency is near to $\sqrt{\alpha/(LV)}$,
resonance occurs and the solutions of the $O(\eps)$-system can be very
large.  When they are of the same order as the inverse periodicity
(i.e.: $\eps^{-1}$), then the sound absorber can have a relevant
effect. This is discussed towards the end of this introduction.

\paragraph{\bf Geometry.}

We next describe the domain $\Omega_\eps$. It consists of a volume
$\Omega_0$ and some small scale structures that are attached to one
part of the boundary of $\Omega_0$. To keep the setting simple, we
assume that $\Omega_0$ is a rectangle in $\R^2$. With the two positive
parameters $a,b>0$ we denote by the interval $I := (0,a)$ the range of
the horizontal coordinate $x_1$. The limit domain is
$$\Omega_0 := (0,a) \times (-b,0) = I \times (-b,0)\,,$$
the upper boundary of $\Omega_0$ is the set
$\Gamma_0 := I\times \{0\}$.  By slight abuse of notation we will
identify functions on $I$ with functions on $\Gamma_0$.

Attached to $\Gamma_0$ is the resonator volume, which is connected
with many thin channels to the volume $\Omega_0$.  The channels are
distributed periodically with a spacing $\eps>0$; our analysis is
concerned with the limit $\eps\to 0$.  We denote by $L>0$ and $V>0$
the relative length of the channels and the relative thickness of the
resonator volume, respectively. The parameter $\alpha>0$ denotes a
relative width of the channels.

\begin{figure}[ht]
  \centering
  \begin{tikzpicture}[scale = 0.62]
    \draw[thick] (0, -6) to (10,-6) to (10,2.5) to (0,2.5) to (0,-6) circle;

    \node[] at (0.8, 1.75) {$S_\eps$};
    \node[] at (-0.8, 0.5) {$C_\eps$};
    \node[] at (0.8, -5) {$\Omega_0$};

    \draw[->] (-1, 0.0) -- (12.5, 0.0);
    \node[] at (12.5, -.52) {$x_1$};
    \draw[->] (0.0, -6.5) -- (0.0, 3.8);
    \node[] at (0.55, 3.9) {$x_2$};

    \draw[<->] (10.7, 0.0) -- (10.7, 1.0);
    \node[] at (11.4, 0.5) {$L \eps$};
    
    \draw[<->] (10.7, 2.5) -- (10.7, 1.0);
    \node[] at (11.4, 1.75) {$V \eps$};

    \foreach \x in {0,...,19}
    {
      \tikzset{shift={(0.5*\x, 0)}}
      \draw[] (0.2, 0.0) to (0.5, 0.0) to (0.5,1.0)
      to (0.2,1.0) to (0.2, 0.0) circle;
    }
    
    \end{tikzpicture}
    \caption{\small The geometry. The complex domain $\Omega_\eps$ is
      given as the union of a limit domain $\Omega_0$ (the domain
      below the $x_1$-axis), the set of channels $C_\eps$, and the
      strip $S_\eps$ above the channels. The length of the channels is
      $L\eps$, the width of the strip $S_\eps$ is $V\eps$. The
      channels are distributed with periodicity $\eps$, the width of
      the channels is $\alpha \eps^3$.}
  \label{fig:num1}
\end{figure}
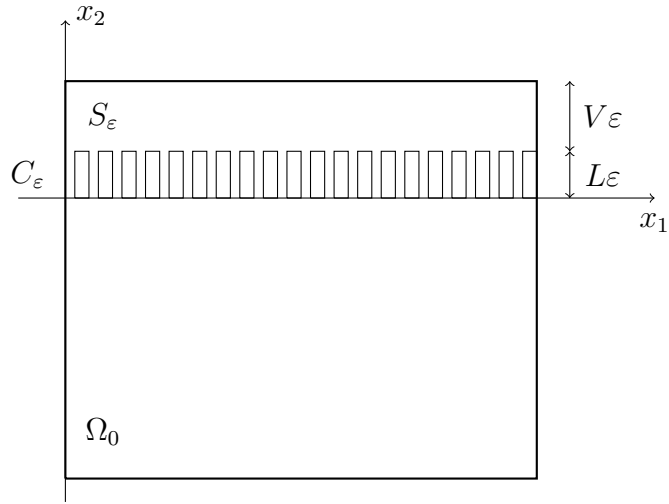

The domain $\Omega_\eps$ is constructed as the union of three sets as
described below (see Figure 1).  For simplicity we always assume
$a/\eps\in \N$. The resonator strip and the channels are
\begin{align}
  S_\eps &:= I\times (L\eps, (L+V)\eps)\,,\\
  C_\eps &:= \bigcup_{k = 0}^{(a/\eps) - 1} (k\eps, k\eps +
           \alpha\eps^3)\times [0,L\eps]\,,
\end{align}
and the domain is defined as
\begin{equation}
  \label{eq:Omega-eps}
  \Omega_\eps := \Omega_0\cup S_\eps\cup C_\eps\,.
\end{equation}
The upper boundary of $\Omega_\eps$ is
$\Gamma_\eps := I\times \{ (L+V) \eps\}$. We emphasize the fact that
three scales are involved, since the channels have the width
$\alpha\eps^3$. The total volume of the channels is of the order
(length $\times$ width $\times$ number)
$|C_\eps| \sim \eps\, \eps^3\cdot \eps^{-1} = \eps^3$.

\paragraph{\bf Main results.}
We are interested in the limit behavior of a sequence $u^\eps$
satisfying the Helmholtz equation
\begin{equation}\label{eq:HH}
  \begin{split}
    -\Delta u^\eps - \omega^2 u^\eps &= f\qquad\text{ in } \Omega_\eps\,,\\
    \del_n u^\eps &= 0\qquad\text{ on } \del\Omega_\eps\,.
  \end{split}
\end{equation}
Throughout, we assume that the frequency $\omega>0$ and the right hand
side $f\in L^2(\R^2)$ are given. To simplify calculations, we assume
that $f$ has support in $\Omega_0$.

\smallskip We first provide the following theorem in order to stress
that the limit system for \eqref {eq:HH} is trivial.

\begin{theorem}[Trivial limit equation]\label{thm:0}
  Let $(u^\eps)_{\eps>0}$ be a sequence of solutions to \eqref {eq:HH}
  for some sequence $\eps\to 0$. We assume that
  $u^\eps\in H^1(\Omega_\eps)$ is bounded and that a weak limit
  $u\in H^1(\Omega_0)$ exists,
  \begin{equation}
    \label{eq:u-converg-ass}
    u^\eps|_{\Omega_0} \weakto u \quad\text{in } H^1(\Omega_0)\,.
  \end{equation}
  Then $u$ solves the trivial limit problem
  \begin{equation}
    \begin{split}
      -\Delta u - \omega^2 u &= f\qquad\text{ in } \Omega_0\,,\\
      \del_n u &= 0\qquad\text{ on } \del\Omega_0\,.
    \end{split}\label{eq:HH-trivial}
  \end{equation}
\end{theorem}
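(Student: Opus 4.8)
The plan is to pass to the limit in the weak formulation of \eqref{eq:HH}, using test functions that ``live'' on $\Omega_0$ and ignore the small-scale structure. Fix $\varphi\in C^\infty(\overline{\Omega_0})$ and extend it to a function in $C^\infty(\R^2)$ (possible since $\Omega_0$ is a closed rectangle); denote by $\tilde\varphi$ the restriction of this extension to $\Omega_\eps$, so that $\tilde\varphi\in H^1(\Omega_\eps)$ with $\|\tilde\varphi\|_{W^{1,\infty}(\Omega_\eps)}\le C_\varphi$ independent of $\eps$. The weak form of \eqref{eq:HH} then reads
\begin{equation*}
  \int_{\Omega_\eps}\nabla u^\eps\cdot\nabla\tilde\varphi\,dx-\omega^2\int_{\Omega_\eps}u^\eps\,\tilde\varphi\,dx=\int_{\Omega_\eps}f\,\tilde\varphi\,dx\,.
\end{equation*}

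The key point is that the contributions from $S_\eps$ and $C_\eps$ are negligible. From the geometry one reads off $|S_\eps|=aV\eps=O(\eps)$ and $|C_\eps|=a\alpha L\eps^3=O(\eps^3)$, hence $|\Omega_\eps\setminus\Omega_0|\to 0$. Combining the assumed uniform bound $\|u^\eps\|_{H^1(\Omega_\eps)}\le C$ with the $L^\infty$ bound on $\nabla\tilde\varphi$ and Cauchy--Schwarz,
\begin{equation*}
  \Big|\int_{S_\eps\cup C_\eps}\nabla u^\eps\cdot\nabla\tilde\varphi\Big|+\omega^2\Big|\int_{S_\eps\cup C_\eps}u^\eps\,\tilde\varphi\Big|\le C\,C_\varphi\,|S_\eps\cup C_\eps|^{1/2}\longrightarrow 0\,.
\end{equation*}
Since $\supp f\subset\Omega_0$ we moreover have $\int_{\Omega_\eps}f\,\tilde\varphi=\int_{\Omega_0}f\,\varphi$ for every $\eps$. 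Therefore the identity above reduces to
\begin{equation*}
  \int_{\Omega_0}\nabla u^\eps\cdot\nabla\varphi\,dx-\omega^2\int_{\Omega_0}u^\eps\,\varphi\,dx=\int_{\Omega_0}f\,\varphi\,dx+o(1)\quad\text{as }\eps\to 0\,.
\end{equation*}

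It then remains to pass to the limit using \eqref{eq:u-converg-ass}: weak convergence $\nabla u^\eps\weakto\nabla u$ in $L^2(\Omega_0)$ handles the first term, and $u^\eps\to u$ in $L^2(\Omega_0)$ (a consequence of \eqref{eq:u-converg-ass} together with Rellich compactness) handles the second. This gives
\begin{equation*}
  \int_{\Omega_0}\nabla u\cdot\nabla\varphi\,dx-\omega^2\int_{\Omega_0}u\,\varphi\,dx=\int_{\Omega_0}f\,\varphi\,dx
\end{equation*}
for all $\varphi\in C^\infty(\overline{\Omega_0})$, and by density for all $\varphi\in H^1(\Omega_0)$, which is exactly the weak formulation of \eqref{eq:HH-trivial}.

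I do not expect a genuine obstacle in this argument; the only points needing care are the admissibility of $\tilde\varphi$ as an element of $H^1(\Omega_\eps)$ (handled by taking a smooth ambient extension, which avoids any $\eps$-dependent extension constant) and the elementary volume bookkeeping $|S_\eps|=O(\eps)$, $|C_\eps|=O(\eps^3)$. The real significance of the statement is negative: it shows that the $O(1)$ part of $u^\eps$ does not feel the absorber at all, which is precisely what forces the finer, measure-valued $O(\eps)$ analysis behind Theorem \ref{thm:mainTheorem}.
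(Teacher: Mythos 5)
Your proof is correct and follows essentially the same route as the paper's: test the weak form of \eqref{eq:HH} with a fixed smooth function, discard the contributions over $S_\eps\cup C_\eps$ using $|\Omega_\eps\setminus\Omega_0|\to 0$ together with the uniform $H^1(\Omega_\eps)$ bound, and pass to the limit on $\Omega_0$ via \eqref{eq:u-converg-ass} and Rellich compactness. The extra bookkeeping you supply (the explicit extension of the test function and the final density step) only makes the same argument more explicit.
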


\begin{proof}
  Let $\varphi\in C^1(\R^2)$ be an arbitrary test function.  In the
  following calculation, we use first the volume estimate
  $|\Omega_\eps\setminus\Omega_0| = O(\eps)$, then the weak form of
  \eqref{eq:HH}, and finally decompose the integral and exploit the
  boundedness of the sequence $u^\eps$:
  \begin{align*}
    \int_{\Omega_0}f\varphi
    &\leftarrow \int_{\Omega_\eps}f\varphi
    = \int_{\Omega_\eps}\nabla u^\eps\cdot\nabla \varphi
      -\omega^2\int_{\Omega_\eps}u^\eps\varphi\\
    &=\int_{\Omega_0}\nabla u^\eps\cdot\nabla \varphi
       +\int_{\Omega_\eps\setminus\Omega_0}\nabla u^\eps\cdot\nabla \varphi
       -\omega^2\int_{\Omega_0}u^\eps\varphi
       -\omega^2\int_{\Omega_\eps\setminus\Omega_0}u^\eps\varphi\\
    &\to
      \int_{\Omega_0}\nabla u\cdot\nabla \varphi-\omega^2\int_{\Omega_0}u\,\varphi
  \end{align*}
  as $\eps\to 0$.  We thus obtained the weak form of
  \eqref{eq:HH-trivial}.
\end{proof}

The interesting effect in the behavior of solutions becomes visible in
the next order in $\eps$.  We define two new functions. The first
encodes the averages of $u^\eps$ with respect to the variable $x_2$ in
the resonator strip $S_\eps$,
\begin{equation}
  \label{eq:v-eps}
  v^\eps : I \to \R,\quad
  x_1 \mapsto \frac1{\eps V} \int_{\eps L}^{\eps(L+V)} u^\eps(x_1, x_2)\, dx_2\,,
\end{equation}
and the second denotes the corrector from the trivial limit,
\begin{equation}
  \label{eq:w-eps}
  w^\eps : \Omega_0 \to \R,\quad
  w^\eps := \frac{u^\eps - u}{\eps}\,.
\end{equation}

We work with the following assumption.

\begin{assumption}\label{ass:v-w}
  For some $v\in H^1(I)$ there holds $v^\eps\weakto v$ in
  $L^2(I)$. Moreover, the sequence $w^\eps$ is bounded in
  $W^{1,1}(\Omega_0)$ and, for some $w\in W^{1,1}(\Omega_0)$,
  $w^\eps\weakto w$ weak-$*$ in $BV(\bar\Omega_0)$.  The sequence
  $u^\eps\in H^1(\Omega_\eps)$ is bounded and the vertical derivative
  of $u^\eps$ satisfies the following boundedness in the channels: For
  some constant $C>0$, that does not depend on $\eps$, holds
  \begin{align}
  \label{eq:assdel2ueps}
  &\frac{1}{\eps^2}\int_{C_\eps}|\partial_2 u^\eps|\leq C\,.
  \end{align}
\end{assumption}

The weak-$*$ convergence of $w^\eps\to w$ in $BV(\bar\Omega_0)$ is
equivalent to: $w^\eps\to w$ in $L^1(\Omega_0)$ and
$\int_{\Omega_0}\nabla w^\eps\cdot \phi\to\int_{\Omega_0}\nabla
w\cdot\phi$ for all $\phi\in C(\bar\Omega_0;\R^2)$.

\smallskip
For the heuristics of Assumption \ref{ass:v-w} we refer to Section
\ref {ssec.heuristics} below.

\begin{remark}\label{rem:lower-reg-w}
  In what follows it would be sufficient to assume that $\nabla w$ is
  a measure, which is the natural assumption in the context of weak
  $BV$-convergence.  For the sake of simplicity of notation we stick
  to the stronger assumption $w\in W^{1,1}(\Omega_0)$.
\end{remark}

We are now in a position to formulate the
main result of this article. It determines the limit equation for the
function $w$. By definition of $w$, the solution $u^\eps$ has the
expansion $u^\eps \approx u + \eps w$.

\begin{theorem}[Equations for the corrector]
  \label{thm:mainTheorem}
  Let $u^\eps$ and $u$ be as in Theorem \ref {thm:0}. Let $v^\eps$ and
  $w^\eps$ be as in \eqref {eq:v-eps} and \eqref {eq:w-eps}. Let
  Assumption \ref {ass:v-w} hold with limits $v$ and $w$. Then the
  equation for $w$ is
  \begin{equation}
    \begin{split}
      -\Delta w - \omega^2 w &= 0\quad\qquad\qquad\qquad\text{ in } \Omega_0\,,\\
      \del_n w &= V\, (\del^2_1+\omega^2) v
      \qquad \text{ on } \Gamma_0\,,\\
       \del_n w &= 0
      \qquad\qquad\qquad\quad\text{ on } \del\Omega_0\setminus\Gamma_0\,,
    \end{split}\label{eq:HH-w}
  \end{equation}
  and the equation for $v$ is
  \begin{equation}
    \label{eq:limiteq-v}
    \left(-\del_1^2 + \left(\frac{\alpha}{LV}-\omega^2\right)\right) v
    = \frac{\alpha}{LV} u|_{\Gamma_0} \,.
  \end{equation}

  The function $v$ has the regularity $v\in W^{2,1}(I)$. System \eqref
  {eq:HH-w}--\eqref {eq:limiteq-v} has to be understood in the weak
  sense: For every $\fhi\in C^1(\bar\Omega_0)$ holds
  \begin{equation}\label{eq:main-w-weak}
    \int_{\Omega_0} \nabla w\cdot \nabla\fhi
    - \omega^2 \int_{\Omega_0}  w\, \fhi
      =  - V \int_{\Gamma_0} (\del_1 v\, \del_1 \fhi
      - \omega^2 v\, \fhi ) \,,
  \end{equation}
  and for every $\psi\in C^1(\bar\Gamma_0)$ holds
  \begin{equation}
    \label{eq:limiteq-v-weak}
    \int_{\Gamma_0} \del_1 v\, \del_1 \psi
    + \int_{\Gamma_0} \left(\frac{\alpha}{LV}-\omega^2\right) v \,\psi
    = \int_{\Gamma_0} \frac{\alpha}{LV} u \,\psi \,.
  \end{equation}
  We note that \eqref {eq:limiteq-v-weak} encodes not only \eqref
  {eq:limiteq-v}, but additionally the homogeneous Neumann boundary
  condition $\del_1 v = 0$ at $\del I$.
\end{theorem}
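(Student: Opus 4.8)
The plan is to start from the weak form of the Helmholtz equation \eqref{eq:HH} on $\Omega_\eps$, test with functions adapted to the three-scale geometry, and pass to the limit using Assumption \ref{ass:v-w}. I would split the test function into a part living on $\Omega_0$ and a part accounting for the strip $S_\eps$ and the channels $C_\eps$.

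First I would derive the equation \eqref{eq:main-w-weak} for $w$. Take $\fhi\in C^1(\bar\Omega_0)$ and extend it to a test function $\fhi^\eps$ on $\Omega_\eps$ that equals $\fhi$ on $\Omega_0$, is extended constantly in $x_2$ (equal to $\fhi|_{\Gamma_0}$) across the channels and the strip. Plug $\fhi^\eps$ into the weak form of \eqref{eq:HH}. On $\Omega_0$ we subtract the trivial limit equation \eqref{eq:HH-trivial} (satisfied by $u$ from Theorem \ref{thm:0}) and divide by $\eps$; since $w^\eps = (u^\eps-u)/\eps \weakstar w$ in $BV(\bar\Omega_0)$, the volume terms converge to $\int_{\Omega_0}\nabla w\cdot\nabla\fhi - \omega^2\int_{\Omega_0} w\,\fhi$. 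The contribution from $C_\eps$: since $\partial_1\fhi^\eps$ is bounded and $|C_\eps|\sim\eps^3$, while $\partial_2\fhi^\eps=0$, the gradient term over the channels is $O(\eps^3)\cdot\eps^{-1}=O(\eps^2)$ after the division by $\eps$, hence negligible; the zero-order term is likewise negligible. The contribution from $S_\eps$ is the crucial one. There $\fhi^\eps$ depends only on $x_1$, so $\int_{S_\eps}\nabla u^\eps\cdot\nabla\fhi^\eps = \int_{S_\eps}\partial_1 u^\eps\,\partial_1\fhi$; using $|S_\eps|\sim\eps$, the definition of $v^\eps$ in \eqref{eq:v-eps} and an averaging in $x_2$, this becomes $\eps V\int_I \partial_1 v^\eps\,\partial_1\fhi + o(\eps)$, and the zero-order term gives $-\eps V\omega^2\int_I v^\eps\fhi$. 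The right-hand side $\int_{\Omega_\eps} f\fhi^\eps$ converges (after subtracting the leading order) with a remainder that is $O(\eps)$ on $\Omega_0$ and negligible elsewhere since $f$ is supported in $\Omega_0$; dividing by $\eps$ and using $v^\eps\weakto v$ in $L^2(I)$ together with $\partial_1 v^\eps\weakto\partial_1 v$ (from the $H^1(I)$ bound implicit in Assumption \ref{ass:v-w}) yields exactly \eqref{eq:main-w-weak}, including the boundary condition on $\Gamma_0$ and the homogeneous Neumann condition on the remaining boundary.

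Next I would derive \eqref{eq:limiteq-v-weak} for $v$. Here I test \eqref{eq:HH} with a function $\psi^\eps$ that is supported near $\Gamma_0$, in the strip and channels: on $S_\eps$ set $\psi^\eps(x_1,x_2)=\psi(x_1)$ with $\psi\in C^1(\bar\Gamma_0)$, extend it through each channel so that it interpolates linearly in $x_2$ between the value $\psi(x_1)$ at the top (level $L\eps$) and $0$ at the bottom (level $0$), and set $\psi^\eps=0$ on $\Omega_0$. The gradient of $\psi^\eps$ in the channels has a vertical component of size $1/(L\eps)$ and a horizontal component of size $O(1)$. Testing gives, after dividing by the appropriate power of $\eps$: the strip contributes $\eps V\big(\int_I\partial_1 v^\eps\,\partial_1\psi - \omega^2\int_I v^\eps\psi\big)$; the channels contribute $\int_{C_\eps}\partial_2 u^\eps\,\partial_2\psi^\eps$, which equals $\frac{1}{L\eps}\int_{C_\eps}\partial_2 u^\eps\,\psi(x_1)\,(\text{up to lower order from the }x_1\text{-part})$. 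Using the bound \eqref{eq:assdel2ueps} and the fact that on a channel $\int_0^{L\eps}\partial_2 u^\eps\,dx_2 = u^\eps(x_1,L\eps) - u^\eps(x_1,0)$, which is approximately $\eps(v(x_1) - u|_{\Gamma_0}(x_1))$ in the limit (the top value feeds into $v$ by continuity of traces through the thin channel, the bottom value is the trace of $u$ on $\Gamma_0$), and multiplying by the number $\alpha\eps^3\cdot\eps^{-1}=\alpha\eps^2$ for width-times-count, one gets the term $\frac{\alpha}{L}\int_{\Gamma_0}(v - u)\,\psi$ at the correct order. Matching orders in $\eps$, dividing by the common factor, and collecting terms yields \eqref{eq:limiteq-v-weak} (with the interior equation \eqref{eq:limiteq-v} and the Neumann condition $\partial_1 v=0$ at $\partial I$ both encoded). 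Finally, elliptic regularity for the one-dimensional problem \eqref{eq:limiteq-v-weak}: once we know $u|_{\Gamma_0}\in L^2(I)$, the equation $-\partial_1^2 v = \frac{\alpha}{LV}(u|_{\Gamma_0} - v) + \omega^2 v \in L^2(I)\subset L^1(I)$ gives $v\in W^{2,1}(I)$ (even $W^{2,2}$, but $W^{2,1}$ suffices and matches the statement).

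I expect the main obstacle to be the careful bookkeeping of the orders of $\eps$ in the channel terms, and in particular justifying that the boundary values $u^\eps(x_1,L\eps)$ and $u^\eps(x_1,0)$ converge to $v(x_1)$ and $u|_{\Gamma_0}(x_1)$ respectively in the $L^1$-averaged sense across the channels. Because $u^\eps$ is only $H^1$ and the channels have width $\alpha\eps^3$ (so traces and Poincaré-type estimates on them degenerate), the usual $L^2$ trace machinery is unavailable; this is exactly why Assumption \ref{ass:v-w} postulates the $L^1$-type bound \eqref{eq:assdel2ueps} and works with $BV$ and measures. The delicate point is to show that the measure-valued limits of $\partial_2 u^\eps|_{C_\eps}$ and of the traces are consistent with the right-hand sides in \eqref{eq:HH-w} and \eqref{eq:limiteq-v}; the heuristics of Section \ref{ssec.heuristics} will be what pins this down, and the rigorous version requires extracting weak-$*$ limits of the relevant $\eps$-rescaled measures and identifying them via the test functions chosen above.
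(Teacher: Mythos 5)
Your overall architecture matches the paper's: the equation \eqref{eq:main-w-weak} for $w$ is obtained exactly as in Proposition \ref{prop:A} (test functions constant in $x_2$ above $\Gamma_0$, subtract the equation for $u$, divide by $\eps$), and your second test function $\psi^\eps$ (equal to $\psi(x_1)$ in $S_\eps$, interpolated linearly through each channel, zero in $\Omega_0$) is precisely the $\psi(x_1)\theta^\eps(x_2)$ used in Proposition \ref{prop:B} to obtain the mass-conservation relation $\int_I j\,\psi = -V\int_I \del_1 v\,\del_1\psi + V\omega^2\int_I v\,\psi$ for the weak-$*$ limit $j$ of the rescaled channel flux. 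Two smaller remarks: the difference $u^\eps(x_1,L\eps)-u^\eps(x_1,0)$ is of order one (approximately $v-u$), not $\eps(v-u)$ as you write, consistent with $\del_2 u^\eps\sim\eps^{-1}$ in the channels; and Assumption \ref{ass:v-w} contains no $H^1(I)$ bound on $v^\eps$, so the strip term must be handled by integrating $\del_1$ onto the test function and using only $v^\eps\weakto v$ in $L^2(I)$, as the paper does.

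The genuine gap is the geometric flow rule, i.e.\ the identification $j=\frac{\alpha}{L}(v-u|_{\Gamma_0})$, without which the mass-conservation identity cannot be turned into \eqref{eq:limiteq-v-weak}. You reduce this to the claim that the averages of $u^\eps$ over the channel mouths $\Gamma^V_\eps$ and $\Gamma^U_\eps$ converge to $v$ and $u|_{\Gamma_0}$, and justify it by ``continuity of traces''. This is exactly the step that does not follow from the stated hypotheses: the mouths occupy only a fraction $\alpha\eps^2$ of each periodicity cell, so $\frac{1}{\eps^2}\int_{\Gamma^U_\eps}u^\eps$ is an average over an asymptotically negligible subset of $\Gamma_0$, and an $H^1$ bound on $u^\eps$ gives no control of such averages in terms of bulk averages (the trace could concentrate near the mouths). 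The paper closes this gap in Proposition \ref{prop:geometricflowrule} by a specific two-part mechanism: first, a cell-averaged function $U^\eps$ on the unit cell $Y$ is shown to converge weakly to constants $\xi_U,\xi_V$ identified with the averages of $u(\cdot,0)$ and $v$; second, Lemma A.1 of \cite{Lamacz-Schweizer-manyres-2017} is invoked, whose proof differentiates the Helmholtz equation along the channel direction to obtain uniform $H^2$ bounds and hence $C^0$ control, which is what allows point values of $U^\eps$ at the shrinking channel ends to be compared with its bulk averages. Some argument of this strength (or an additional assumption) is indispensable; your proposal names the difficulty but supplies no substitute for it.
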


We have formulated the limiting system in a form that shows the
existence and uniqueness of solutions for almost all frequencies
$\omega$. The limit problem for $u$ has a unique solution if $\omega$
is not an eigenvalue of the Neumann Laplace operator on
$\Omega_0$. Given $u\in H^1(\Omega_0)$ and its trace
$u|_{\Gamma_0} \in H^{1/2}(\Gamma_0)$, equation \eqref {eq:limiteq-v}
with Neumann boundary conditions $\del_1 v = 0$ at $\del I$ can be
solved for $v\in H^2(I)$. Finally, assuming again that $\omega$ is not
an eigenvalue of the Neumann Laplace operator on $\Omega_0$, we can
solve system \eqref {eq:HH-w} for $w\in H^1(\Omega_0)$. This line of
argument yields not only existence, but also uniqueness of solutions
with $w$ of class $H^1$.

We note that we required less regularity on $w$ in Assumption
\ref{ass:v-w}. Our results imply that, if the limit has the additional
regularity $w\in H^1(\Omega_0)$, then it necessarily coincides with
the unique $H^1$-solution of the limit system (for $\omega$ not an
eigenvalue of the Neumann Laplace operator of $\Omega_0$).

The limit equation \eqref{eq:limiteq-v} can be re-written as
\begin{equation}\label{eq:equiv-boundary-eq}
  (\del^2_1+\omega^2)v = \frac{\alpha}{LV}(v-u)\quad\text{on }\Gamma_0\,.
\end{equation}
The boundary condition for $w$ along $\Gamma_0$ can therefore be
expressed as $\del_n w = \frac{\alpha}{L} (v-u)$ and equation \eqref
{eq:main-w-weak} can be written as
\begin{equation*}
  \int_{\Omega_0}\nabla w\cdot\nabla\varphi -\omega^2\int_{\Omega_0}w\varphi
  = \frac{\alpha}{L} \int_{\Gamma_0} (v-u) \varphi\,.
\end{equation*}
The derivation of \eqref {eq:HH-w} is actually not difficult, we
present the proof in Proposition \ref{prop:A}. The connections between
$u$ and $v$ are more involved, we derive two relations in Propositions
\ref{prop:geometricflowrule} and \ref{prop:B}.  Theorem
\ref{thm:mainTheorem} is proved after Proposition \ref{prop:B}.

\paragraph{\bf Interpretation of the main result.}

As stressed before, the limit solution $u$ is not affected by the
small scale structures along the boundary.

Let us study the limit equation \eqref{eq:limiteq-v}. The
function $v$ depends only on the horizontal coordinate $x_1$. Let us
consider solutions of the form $v(x_1) = v_0 \sin(k x_1)$ and
$u(x_1, 0) = u_0 \sin(k x_1)$ for some real parameters
$v_0, u_0 \in \R$.  Equation \eqref{eq:limiteq-v} then reads
\begin{equation*}
  \left( k^2  + \frac{\alpha}{LV} - \omega^2\right) v_0
  = \frac{\alpha}{LV} u_0\,.
\end{equation*}
This relation implies that, for resonant frequencies $\omega$, the
factor $v_0$ can be much larger than the factor $u_0$. For small
horizontal wave numbers $k$, this occurs when $\omega$ is close to the
Helmholtz resonator frequency $\omega_H := \sqrt{\alpha/(LV)}$.

When all the functions $w, u$, and $v$ have the dependence
$\sin(kx_1)$ on $x_1$, then the problem for $w$ is a homogeneous
Helmholtz problem with the upper boundary condition
\begin{align*}
  \del_n w &= \frac{\alpha}{L} (v-u)=\frac{\alpha}{L}(v_0-u_0)\sin(k x_1)\\
           &=\frac{\alpha}{L}\left[\frac{\alpha}{LV}
             \left(k^2+\frac{\alpha}{LV}-\omega^2\right)^{-1} - 1
             \right]u_0\sin(kx_1)\,.
\end{align*}
The factor in squared brackets can be large due to resonance (small
denominator). This results in large values of the function $w$. In the
reconstruction of $u^\eps$ we obtain $u^\eps \approx u + \eps w$, and
the correction has the order
$\eps \| w\| = O(\eps ( k^2 - \omega^2 + \frac{\alpha}{LV}
)^{-1})$. Due to the resonance, this can constitute a visible (or,
better: audable) contribution even for small periodicity length
$\eps>0$.

\paragraph{\bf Literature.}

Some of the first mathematical results in the field of homogenization
regarded the derivation of limit equations for domains that are
periodically perforated, see, e.g., \cite{Cioranescu1997}. Quickly,
the interest shifted also to geometries where the perforations are
along lower dimensional manifolds, we refer to \cite {MR1655534,
  MR658023} for two early contributions. The periodic unfolding method
was adapted to this kind of problems, see \cite {MR2401689}.  For the
problem in the context of fluid mechanics, see \cite {MR884812}.

As a ``natural scaling'' we regard the setting where the periodicity
is $\eps>0$, and the typical size of the obstacles is also $\eps$ (in
every direction). This scaling was also considered in the papers \cite
{MR3324482, MR2935369, MR3109435, DHS, Schweizer-Neumann-2018}.  The
aim of these papers is to provide a thorough analysis of the Neumann
problem, for which no effects of order $\eps^0$ are induced by the
geometry. In order to derive limit equations one has to analyze higher
order effects. Progress was possible in \cite {Schweizer-Neumann-2018}
with the consequent use of $W^{1,1}$-spaces: the expansion of the
solution has natural bounds in the corresponding norms.

We emphasize that, in the natural scaling, where periodicity, width,
and the length of the channels are all of order $\eps$, no resonances
can occur. In such a setting, one can only expect that deviations from
the trivial limit solution $u$ are of order $\eps$.

We note that another scaling is used, e.g., in
\cite{Plasmonic-waves-2013, Neuss-Radu-Jaeger, MR3808152}: Here, a
structure of finite width is analyzed. For a periodicity $\eps>0$ and
a diameter of the channels of order $\eps$, the length of the channels
does not tend to $0$ as $\eps\to 0$. This scaling allows for
resonances in the longitudinal direction of the channels.  Yet another
setting of the geometry was used, e.g., in \cite {MR2151798}: One
considers ``perforations'' in the boundary or in an interface of lower
dimension. The resulting system has the character of an oscillatory
boundary condition, we mention \cite {MR2739003} as a contribution in
this vast field.

The combination of two different small length scales in the obstacles
can create resonant structures. This is well-known for the Helmholtz
resonator and it was used for an analysis of spectral properties in
\cite {Schweizer-Helmholtz-resonator}. Using the small Helmholtz
resonator as a building block, one can create resonant bulk materials,
see \cite {Lamacz-Schweizer-manyres-2017}. In that work, the
resonators are distributed in the whole volume and not only along the
boundary.  For an overview regarding resonances and homogenization in
this spirit, we mention \cite {MR3591457}.

\section{Preliminaries and proof of \eqref {eq:main-w-weak}}

\subsection{Expected orders of different quantities}
\label{ssec.heuristics}

It might be surprising that we work with $L^1$-based spaces. The
choice of the function space is important. In fact, we claim that
working only in $L^2$-based function spaces is not adequate in the
problem at hand. We note that a similar observation was made in \cite
{Schweizer-Neumann-2018}.

Let us discuss heuristically the behavior of solutions.  We expect
that $u^\eps$ has values of order $\eps^0 = 1$ everywhere, in the
domain $\Omega_0$ and in the resonator strip $S_\eps$.

Since the channels are thin, there is is only a weak connection
between the volume $\Omega_0$ and the strip $S_\eps$. There is no
reason why the values of $u^\eps$ at both ends of the channel should
be close. We can therefore expect that also the difference $v- u$ is
of order $1$. As a result, since the length of each channel is of
order $\eps$, the derivative $\del_2 u^\eps$ should be of order
$\eps^{-1}$ in the channels.

With respect to \eqref {eq:assdel2ueps} we recall that the total
volume of the channels is of the order $|C_\eps| \sim \eps^3$. We can
therefore expect that the quantity in \eqref {eq:assdel2ueps},
$\eps^{-2} \int_{C_\eps}|\partial_2 u^\eps|$, is bounded.

We note that the boundedness of $\nabla u^\eps \in L^2(\Omega_\eps)$
implies the following property of horizontal derivatives:
\begin{equation}\label{eq:assdel1ueps}
  \frac{1}{\eps}\int_{C_\eps}|\partial_1 u^\eps|
  \le \frac{1}{\eps} \| \nabla u^\eps \|_{L^2(\Omega_\eps)} |C_\eps|^{1/2}
  \le C \eps^{3/2 - 1} \to 0\,.
\end{equation} 

We include the warning that $L^2$-spaces are not adequate for this
problem. We expect
$\eps^{-2} \int_{C_\eps}|\partial_2 u^\eps|^2 \sim \eps^{-2} |C_\eps|
\eps^{-2} \sim \eps^{-1} \to \infty$.  In particular, we do not expect
that $L^2$-norms of $\nabla w^\eps$ are bounded. This is why we work
in the $L^1$-family of norms.

\subsection{Proof of \eqref {eq:main-w-weak}}

In this subsection we derive that the corrector $w$ satisfies
\eqref{eq:HH-w}. More precisely, we derive the weak form \eqref
{eq:main-w-weak}.

\begin{proposition}
  \label{prop:A}
  Let the sequence $u^\eps$ be as in Theorem
  \ref{thm:mainTheorem}. Then the limit function $w$ satisfies the
  effective equation \eqref {eq:main-w-weak}.
\end{proposition}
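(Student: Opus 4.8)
The plan is to pass to the limit in the weak form of \eqref{eq:HH}, testing with functions $\fhi\in C^1(\bar\Omega_0)$ extended suitably to $\Omega_\eps$. Write the weak formulation
\[
\int_{\Omega_\eps}\nabla u^\eps\cdot\nabla\fhi - \omega^2\int_{\Omega_\eps}u^\eps\fhi = \int_{\Omega_\eps}f\fhi,
\]
and split $\Omega_\eps = \Omega_0\cup C_\eps\cup S_\eps$. Since $f$ is supported in $\Omega_0$, the right-hand side is just $\int_{\Omega_0}f\fhi$. On the $\Omega_0$ piece I would substitute $u^\eps = u + \eps w^\eps$ and isolate the $O(\eps)$ contribution: dividing the whole identity by $\eps$ and using that $u$ solves \eqref{eq:HH-trivial}, the $\Omega_0$-integrals collapse to $\int_{\Omega_0}\nabla w^\eps\cdot\nabla\fhi - \omega^2\int_{\Omega_0}w^\eps\fhi$, which converges to $\int_{\Omega_0}\nabla w\cdot\nabla\fhi - \omega^2\int_{\Omega_0}w\fhi$ by the weak-$*$ $BV$ convergence of $w^\eps$ (the gradient term uses exactly the dual pairing against $\nabla\fhi\in C(\bar\Omega_0;\R^2)$, the zero-order term uses $L^1$ convergence). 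So after dividing by $\eps$, the task reduces to computing $\lim_{\eps\to 0}\frac1\eps\bigl(\int_{C_\eps}+\int_{S_\eps}\bigr)(\nabla u^\eps\cdot\nabla\fhi - \omega^2 u^\eps\fhi)$ and showing it equals $V\int_{\Gamma_0}(\del_1 v\,\del_1\fhi - \omega^2 v\,\fhi)$ with the sign in \eqref{eq:main-w-weak}.

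For the strip $S_\eps = I\times(L\eps,(L+V)\eps)$, which has full width $a$ and height $V\eps$, the key point is that as $\eps\to 0$ the strip collapses onto $\Gamma_0$, and $\fhi$ restricted there is approximately $\fhi(x_1,0)$ up to $O(\eps)$. Thus $\frac1\eps\int_{S_\eps}\del_1 u^\eps\,\del_1\fhi \approx \int_I\del_1\fhi(x_1,0)\cdot\frac1{\eps V}\int_{L\eps}^{(L+V)\eps}\del_1 u^\eps\,dx_2\cdot V\,dx_1 = V\int_{\Gamma_0}\del_1 v^\eps\,\del_1\fhi + o(1)$, using the definition \eqref{eq:v-eps} of $v^\eps$ and commuting $\del_1$ with the vertical average; this converges to $V\int_{\Gamma_0}\del_1 v\,\del_1\fhi$. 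Similarly $\frac1\eps\int_{S_\eps}\del_2 u^\eps\,\del_2\fhi$ involves $\del_2\fhi$, which is bounded, times $\frac1\eps\int_{S_\eps}\del_2 u^\eps$, and the latter is $\frac1\eps\int_I (u^\eps(x_1,(L+V)\eps) - u^\eps(x_1,L\eps))\,dx_1$; one must argue this is $o(1)$ — I expect to use that the top is a Neumann boundary and the bottom couples only through the tiny channels, so the net vertical flux through the strip is small, or more carefully that $\del_2 u^\eps$ is $L^2$-bounded on $S_\eps$ while $|S_\eps|=O(\eps)$, giving $\eps^{-1}\int_{S_\eps}|\del_2 u^\eps|\le \eps^{-1}\|\del_2 u^\eps\|_{L^2(S_\eps)}|S_\eps|^{1/2} = O(\eps^{-1/2})$ — which is \emph{not} obviously small, so I would instead integrate by parts in $x_2$ to transfer the derivative and use the Neumann condition on $\Gamma_\eps$ together with the channel bound \eqref{eq:assdel2ueps}. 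Finally $\frac1\eps\int_{S_\eps}u^\eps\fhi\to V\int_{\Gamma_0}v\,\fhi$ directly from the definition of $v^\eps$ and its weak $L^2(I)$ limit.

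For the channels $C_\eps$, everything is negligible: by \eqref{eq:assdel1ueps} the horizontal-derivative term $\eps^{-1}\int_{C_\eps}|\del_1 u^\eps| = O(\eps^{1/2})\to 0$; the vertical-derivative term $\eps^{-1}\int_{C_\eps}\del_2 u^\eps\,\del_2\fhi$ is bounded by $\|\del_2\fhi\|_\infty\cdot\eps^{-1}\int_{C_\eps}|\del_2 u^\eps|$, and since $\eps^{-2}\int_{C_\eps}|\del_2 u^\eps|\le C$ by \eqref{eq:assdel2ueps}, this is $O(\eps)\to 0$; and $\eps^{-1}\int_{C_\eps}|u^\eps\fhi|\le C\eps^{-1}|C_\eps|^{1/2}\|u^\eps\|_{L^2}\to 0$ since $|C_\eps|=O(\eps^3)$. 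Collecting: the $O(\eps^0)$-part of the identity vanishes (that is Theorem \ref{thm:0}), and the $O(\eps^1)$-part yields $\int_{\Omega_0}\nabla w\cdot\nabla\fhi - \omega^2\int_{\Omega_0}w\,\fhi + V\int_{\Gamma_0}(\del_1 v\,\del_1\fhi - \omega^2 v\,\fhi) = 0$, which is \eqref{eq:main-w-weak}. The main obstacle I anticipate is the rigorous treatment of the vertical flux term $\eps^{-1}\int_{S_\eps}\del_2 u^\eps\,\del_2\fhi$: naive $L^2$-Cauchy--Schwarz does not close it, so one must exploit the structure (integration by parts in $x_2$, the homogeneous Neumann condition on $\Gamma_\eps$, and the fact that the only inflow into $S_\eps$ is through the channels, whose total flux is controlled by \eqref{eq:assdel2ueps}) — or, alternatively, split $\fhi = \fhi(x_1,0) + (\fhi - \fhi(x_1,0))$ and note the constant-in-$x_2$ part kills $\del_2$ of it, leaving an $O(\eps)$ remainder against the bounded quantity $\eps^{-1}\int_{S_\eps}|\del_2 u^\eps|$, which still requires that last quantity to be bounded.
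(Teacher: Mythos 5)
Your overall architecture is the same as the paper's: subtract the weak form of the limit equation for $u$ from that of $u^\eps$, divide by $\eps$, pass to the limit in the $\Omega_0$-part via the $BV$-weak-$*$ convergence of $w^\eps$, discard the channel contributions by the $L^2$-bounds and \eqref{eq:assdel1ueps}--\eqref{eq:assdel2ueps}, and identify the strip contributions with $v$. However, there is a genuine gap at exactly the point you flag as ``the main obstacle'': the term $\eps^{-1}\int_{S_\eps}\del_2 u^\eps\,\del_2\fhi$. Neither of your proposed fixes closes it. Cauchy--Schwarz gives only $O(\eps^{-1/2})$, as you note; integrating by parts in $x_2$ produces boundary terms on the top and bottom of $S_\eps$ that are individually $O(\eps^{-1})$ after division by $\eps$, and their difference is controlled only by $\int_{S_\eps}|\del_2 u^\eps|\le C\eps^{1/2}$, which again yields $O(\eps^{-1/2})$; and your splitting $\fhi=\fhi(x_1,0)+(\fhi-\fhi(x_1,0))$ does not help here because $\del_2$ of the remainder is $\del_2\fhi=O(1)$, so the problematic term is reproduced unchanged. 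No assumption in the paper bounds $\eps^{-1}\int_{S_\eps}|\del_2 u^\eps|$.

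The paper's resolution is not an estimate but a choice: since the identity \eqref{eq:main-w-weak} only involves $\fhi$ on $\bar\Omega_0$, one is free to prescribe the extension of the test function into $\Omega_\eps\setminus\Omega_0$. The paper takes $\fhi$ independent of $x_2$ for $x_2\ge 0$ (still Lipschitz, hence admissible in the weak form on $\Omega_\eps$), so that $\del_2\fhi\equiv 0$ in $S_\eps\cup C_\eps$ and the troublesome term vanishes identically; general $\fhi\in C^1(\bar\Omega_0)$ is then recovered by density. You should adopt this device. A second, smaller issue: for the tangential strip term you write $\eps^{-1}\int_{S_\eps}\del_1 u^\eps\,\del_1\fhi \approx V\int_I \del_1 v^\eps\,\del_1\fhi$ and pass to the limit, but Assumption \ref{ass:v-w} gives only $v^\eps\weakto v$ in $L^2(I)$, not convergence of $\del_1 v^\eps$. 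You must first integrate by parts in $x_1$ to put both derivatives on $\fhi$ (as the paper does, obtaining $\eps^{-1}\int_{S_\eps}u^\eps\,\del_1^2\fhi\to V\int_I v\,\del_1^2\fhi=-V\int_I\del_1 v\,\del_1\fhi$), which also explains why the paper uses $C^2$-type test functions before the final density argument.
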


\begin{proof}
  Let $\fhi\in C^1(\R^2)$ by an arbitrary test function.  Decomposing
  the integral over $\Omega_\eps$ into intgrals over $\Omega_0$ and
  $S_\eps\cup C_\eps$, the equation for $u^\eps$ reads
  \begin{align*}
    \int_{\Omega_0}\nabla u^\eps\cdot\nabla\fhi-\omega^2\int_{\Omega_0}u^\eps\fhi
    + \int_{S_\eps\cup C_\eps}\nabla u^\eps\cdot\nabla\fhi
    -\omega^2\int_{S_\eps\cup C_\eps}u^\eps\fhi
    = \int_{\Omega_\eps}f\fhi=\int_{\Omega_0}f\fhi\,.
  \end{align*}
  On the other hand, the equation for $u$ provides
  \begin{align*}
    \int_{\Omega_0}\nabla u\cdot\nabla\fhi-\omega^2\int_{\Omega_0}u\fhi 
   =\int_{\Omega_0}f\fhi\,.
  \end{align*}
  We subtract the two equations, divide by $\eps$, and insert the
  definition $w^\eps = (u^\eps - u)/\eps$ to obtain
  \begin{equation}
  \label{eq:weakformweps}
    \int_{\Omega_0} \nabla w^\eps\cdot \nabla\fhi
    - \omega^2 \int_{\Omega_0}  w^\eps\, \fhi
    = - \frac1{\eps} \left\{ \int_{S_\eps \cup C_\eps} \nabla u^\eps\cdot \nabla\fhi
      - \omega^2 \int_{S_\eps \cup C_\eps}  u^\eps\, \fhi \right\}\,.
  \end{equation}

  We consider test functions $\fhi$ that have the regularity
  $\fhi|_{\bar\Omega_0} \in C^1(\bar\Omega_0)$, assume that they are
  independent of $x_2$ for $x_2\geq 0$ and that they satisfy
  $\del_1\fhi = 0$ in the set
  $S_\eps\cap \{x_1 < \delta \text{ or } x_1 > a - \delta\}$ for some
  $\delta>0$.

  Assumption \ref{ass:v-w} on $w^\eps$ implies that the left hand side
  of \eqref{eq:weakformweps} converges, as $\eps\to 0$,
  \begin{equation*}
    \int_{\Omega_0} \nabla w^\eps\cdot \nabla\fhi
    - \omega^2 \int_{\Omega_0}  w^\eps\, \fhi
    \to\int_{\Omega_0} \nabla w\cdot \nabla\fhi
    - \omega^2 \int_{\Omega_0}  w\, \fhi\,.
  \end{equation*}
  
  We can use Assumption \ref{ass:v-w} also to calculate the right hand
  side of \eqref{eq:weakformweps}. All integrals over $C_\eps$ vanish
  in the limit $\eps\to 0$ because of boundedness of $u^\eps$ and
  $\nabla u^\eps$ in $L^2(\Omega_\eps)$. For one of the remaining two
  integrals, we use $\del_2 \fhi =0$ in $S_\eps$ and an integration by
  parts to find
  \begin{align*}
    -\frac1{\eps} \int_{S_\eps} \nabla u^\eps\, \nabla\fhi
    = -\frac1{\eps} \int_{S_\eps} \del_1 u^\eps\, \del_1\fhi  
    =\frac{1}{\eps}\int_{S_\eps}u^\eps\,\del^2_1\fhi
    \to V \int_I v\, \del^2_1 \fhi
    = - V \int_I \del_1 v\, \del_1 \fhi\,.
  \end{align*}
  The last integral satisfies
  \begin{align*}
    \frac1{\eps} \int_{S_\eps}  u^\eps \fhi \to V \int_I  v  \fhi\,,
  \end{align*}
  as $\eps\to 0$.  Combining these limits, we arrive at
  \begin{align*}
    &\int_{\Omega_0} \nabla w\cdot \nabla\fhi
    - \omega^2 \int_{\Omega_0}  w\, \fhi
      =  V \int_I (-\del_1 v(x_1)\, \del_1 \fhi(x_1,0)
      + \omega^2 v(x_1) \fhi(x_1,0))\,dx_1 \,.
  \end{align*}
  By density of functions $\fhi$ as above, this relation holds for all
  $\fhi\in C^1(\bar\Omega_0)$. We have obtained \eqref
  {eq:main-w-weak}.
\end{proof}

\subsection{A flux quantity}

Relation \eqref{eq:limiteq-v} between $u$ and $v$ requires much more
involved arguments. We introduce an additional quantity, the vertical
flux $j^\eps$ and its limit $j_*$. We set
\begin{equation}
  \label{eq:currentdef}
  j^\eps(x) := \frac{1}{L\eps^2}\, \del_2 u^\eps(x)\, \one_{C_\eps}(x)\,,
\end{equation}
where $\one_{C_\eps}$ is the characteristic function of the channels,
$\one_{C_\eps}(x) = 1$ for $x\in C_\eps$ and $\one_{C_\eps}(x) = 0$
for $x\not\in C_\eps$. The quantity $j^\eps$ measures, in a rescaled
fashion, the $x_2$-derivative of $u^\eps$ in the channels.

\begin{lemma}
  \label{lem:currentintroduction} 
  Let $u^\eps$ be a sequence as in Theorem \ref {thm:mainTheorem} and
  let $j^\eps$ be as in \eqref{eq:currentdef}. Then there exists a
  subsequence $\eps\to 0$ and a Radon measure
  $j_*\in\mathcal{M}(\R^2)$ with $\supp(j_*)\subset \bar\Gamma_0$ such
  that
  \begin{equation}
    \label{eq:j-eps-star}
    j^\eps\weakstar j_* 
  \end{equation}
  in the sense of Radon measures.
\end{lemma}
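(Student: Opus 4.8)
The plan is to establish the uniform bound $\|j^\eps\|_{\mathcal M(\R^2)} \le C$ and then invoke the sequential weak-$*$ compactness of bounded sets in $\mathcal M(\R^2) = (C_0(\R^2))^*$, together with a support argument. For the mass bound, note that by definition
\[
  \|j^\eps\|_{\mathcal M(\R^2)} = \int_{\R^2} |j^\eps|
  = \frac{1}{L\eps^2}\int_{C_\eps} |\del_2 u^\eps|\,,
\]
and this is exactly the quantity controlled by \eqref{eq:assdel2ueps} in Assumption \ref{ass:v-w}: it is bounded by $C/L$, a constant independent of $\eps$. Hence $(j^\eps)_{\eps>0}$ is a bounded sequence in $\mathcal M(\R^2)$. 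Since $C_0(\R^2)$ is separable, the Banach--Alaoglu theorem (in its sequential form on separable predual spaces) yields a subsequence, still denoted $\eps\to 0$, and a limit $j_*\in \mathcal M(\R^2)$ with $j^\eps \weakstar j_*$, i.e. $\int \fhi\, dj^\eps \to \int \fhi\, dj_*$ for all $\fhi\in C_0(\R^2)$.

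It remains to identify the support of $j_*$. The point is that $j^\eps$ is supported in the channel set $C_\eps \subset I\times[0,L\eps]$, which shrinks to $\bar\Gamma_0 = \bar I\times\{0\}$ as $\eps\to 0$. Concretely, fix any $\fhi\in C_0(\R^2)$ whose support is disjoint from $\bar\Gamma_0$; then there is $\delta>0$ with $\fhi \equiv 0$ on the $\delta$-neighbourhood of $\bar\Gamma_0$, and for $\eps$ small enough $C_\eps$ lies inside that neighbourhood, so $\int \fhi\, dj^\eps = 0$ for all such $\eps$. Passing to the limit gives $\int \fhi\, dj_* = 0$. Since this holds for every test function vanishing near $\bar\Gamma_0$, we conclude $\supp(j_*) \subset \bar\Gamma_0$.

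I do not expect any serious obstacle here: the only non-trivial input is the uniform $L^1$-type bound \eqref{eq:assdel2ueps}, which is part of the hypotheses, and the rest is the standard weak-$*$ compactness argument for Radon measures plus an elementary localization of the support. One small point of care is that $\R^2$ is not compact, so one should work with $\mathcal M(\R^2) = (C_0(\R^2))^*$ (finite Radon measures) rather than with measures on a compact set; since $\supp(j^\eps)\subset \bar\Omega_0 \cup \overline{S_\eps\cup C_\eps}$ is in fact contained in a fixed compact set for $\eps\le 1$, one may equivalently first restrict attention to a large fixed ball and apply compactness there, which avoids any subtlety about mass escaping to infinity. If one prefers, the whole argument can be phrased as: the bounded sequence of measures on the compact set $\bar I\times[0,L]$ (after the obvious rescaling is unnecessary — $C_\eps$ already sits in $\bar I \times [0,L]$ for $\eps \le 1$) has a weak-$*$ convergent subsequence, and the support collapses onto $\bar\Gamma_0$ by the localization argument above.
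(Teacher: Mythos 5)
Your proposal is correct and follows essentially the same route as the paper: the uniform mass bound comes directly from \eqref{eq:assdel2ueps}, weak-$*$ compactness of bounded Radon measures gives a convergent subsequence, and the support statement follows because $C_\eps$ is contained in an $\eps$-neighbourhood of $\Gamma_0$. Your extra remarks on working with $(C_0(\R^2))^*$ versus measures on a fixed compact set are a harmless elaboration of what the paper leaves implicit.
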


\begin{proof}
  By Assumption \eqref{eq:assdel2ueps} on $\del_2 u^\eps$, the current
  $j^\eps$ is uniformly bounded in $L^1(\R^2)$:
  \begin{equation*}
    \|j^\eps\|_{L^1(\R^2)}=\frac{1}{L\eps^2}\int_{C_\eps}|\del_2 u^\eps|
    \leq \frac{C}{L}\,.
  \end{equation*}
  With the two-dimensional Lebesgue measure $\calL^2$, we can consider
  $j^\eps\, \calL^2$ as a bounded family of measures. The weak star
  compactness of Radon measures implies the existence of a subsequence
  and of a limit measure $j_*\in\mathcal{M}(\R^2)$ with \eqref
  {eq:j-eps-star}.  The measure $j_*$ is concentrated on $\bar\Gamma_0$
  since the measures $j^\eps\, \calL^2$ are supported in the channels
  $C_\eps$, hence in an $\eps$-neighborhood on $\Gamma_0$.
\end{proof}

We will use Lemma \ref {lem:currentintroduction} as follows: For every
function $\fhi\in C(\R^2)$ there holds, as $\eps\to 0$,
\begin{equation}
  \frac{1}{L\eps^2}\int_{C_\eps}\del_2 u^\eps(x)\fhi(x)\,dx
  \to \int_{\bar\Gamma_0}\fhi(x)\,dj_*(x)\,. 
\end{equation}

As a preparation of one of the subsequent proofs, we note that the
arguments of Lemma \ref {lem:currentintroduction} can be repeated for
the absolute values of $j^\eps$: We consider $J^\eps := |
j^\eps|$. The measures $J^\eps \calL^2$ are a bounded family of Radon
measures. Along a subsequence $\eps\to 0$ we can therefore assume, for
some limit Radon measure $J_* \in \calM(\R^2)$ with support in
$\bar\Gamma_0$, that $J^\eps\weakstar J_*$ in the sense of Radon measures.

\section{Relations between $u$ and $v$}
\label{sec:geomflowrule}

In this section we obtain equation \eqref{eq:limiteq-v} for $u$ and
$v$. It is obtained from two other relations that involve $u$, $v$,
and the flux quantity $j_*$: The geometric flow rule \eqref
{eq:geometricflowrule} and the mass conservation \eqref
{eq:densityjstar1}. Upon eliminating the flux $j_*$, we obtain
\eqref{eq:limiteq-v}.

The first of these two new relations is the geometric flow rule and is
shown in Proposition \ref{prop:geometricflowrule}.  This geometric
rule can be perceived as follows: When $u^\eps$ has the typical value
$v$ at the upper end of the channel and the typical value $u$ at the
lower end of the channel, then the derivative has the typical value
$\del_2 u^\eps \sim (v-u)/(L\eps)$. For the integral of $j^\eps$ over
a single channel (with length $L\eps$ and width $\alpha \eps^3$) we
therefore expect to obtain
$L\eps\, \alpha \eps^3 / (L\eps^2)\cdot (v-u)/(L\eps) = (\alpha\eps/L)
(v-u)$. The factor $\eps$ denotes the periodicity. We therefore expect
a relation of the form $j_* = (\alpha/L) (v-u)$. The argument is made
precise in the following Proposition.

\begin{proposition}[Geometric flow rule]
  \label{prop:geometricflowrule}
  Let $j_*$ be as in Lemma \ref{lem:currentintroduction}, $v$ and $u$
  as in Assumption \ref{ass:v-w} and Theorem \ref{thm:0}.  Then there
  exists a density function $j\in L^1(I)$ such that
  \begin{equation}
    \label{eq:reljstarv}
    j_*(x)=j(x_1)\,\mathcal{H}^1|_{\Gamma_0}\,.
  \end{equation}
  The density satisfies
  \begin{equation}
    \label{eq:geometricflowrule}
    j(x_1)=\frac{\alpha}{L}(v(x_1)-u(x_1,0))\,.
  \end{equation}
\end{proposition}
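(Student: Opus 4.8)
The plan is to prove both claims — the fact that $j_*$ has a density with respect to $\calH^1|_{\Gamma_0}$, and the explicit formula for that density — simultaneously by testing the defining relation of $j^\eps$ against suitable test functions and passing to the limit. The key structural observation is that in each channel $(k\eps,k\eps+\alpha\eps^3)\times[0,L\eps]$ the fundamental theorem of calculus in the $x_2$-variable gives
\begin{equation*}
  \int_0^{L\eps}\del_2 u^\eps(x_1,x_2)\,dx_2 = u^\eps(x_1,L\eps) - u^\eps(x_1,0)
\end{equation*}
for a.e.\ horizontal slice $x_1$ in the channel. Multiplying by a test function $\fhi\in C(\R^2)$, integrating in $x_1$ over the channel footprint, and summing over $k$, one rewrites $\frac1{L\eps^2}\int_{C_\eps}\del_2 u^\eps\,\fhi$ in terms of the upper and lower trace values of $u^\eps$ weighted by the channel widths $\alpha\eps^3$ and the periodicity $\eps$. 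First I would make this slicing argument rigorous (using that $u^\eps\in H^1$ so traces on horizontal lines are well-defined and the FTC holds), while controlling the error from letting $\fhi$ vary across a channel of horizontal width $\alpha\eps^3$ and vertical extent $L\eps$ — this error is $O(\eps)$ times $\|\nabla\fhi\|_\infty$ times the $L^1$-bound on $j^\eps$, hence negligible.

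Second, I would identify the limit of the weighted lower-trace sum $\frac{\alpha\eps^3}{L\eps^2}\sum_k \int u^\eps(x_1,0)\fhi\,dx_1 = \frac{\alpha}{L}\cdot\eps\sum_k\int\cdots$. Here the factor $\eps\sum_k$ is a Riemann-sum structure: the channels sit at the points $k\eps$ with spacing $\eps$, and the horizontal integration $\int_{k\eps}^{k\eps+\alpha\eps^3}$ contributes a factor $\alpha\eps^3$ absorbed into the prefactor. Since $u^\eps|_{\Omega_0}\weakto u$ in $H^1(\Omega_0)$, its trace on $\Gamma_0$ converges (strongly in $L^2(\Gamma_0)$ by compactness of the trace operator $H^1(\Omega_0)\to L^2(\Gamma_0)$), so the Riemann sums converge to $\frac{\alpha}{L}\int_{\Gamma_0} u(x_1,0)\fhi(x_1,0)\,dx_1$. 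For the upper-trace sum I want to relate $u^\eps(x_1,L\eps)$ — the value at the \emph{bottom} of the resonator strip $S_\eps$ — to the strip-average $v^\eps$. The natural tool is a Poincar\'e–type estimate on $S_\eps$: the strip has vertical extent $V\eps$, so $\int_{S_\eps}|u^\eps(x_1,L\eps) - v^\eps(x_1)|\,$ is controlled by $\eps\int_{S_\eps}|\del_2 u^\eps|$, and $\|\del_2 u^\eps\|_{L^2(S_\eps)}\le\|\nabla u^\eps\|_{L^2(\Omega_\eps)}=O(1)$ combined with $|S_\eps|=O(\eps)$ gives $\|u^\eps(\cdot,L\eps)-v^\eps\|_{L^1(I)}=O(\eps^{?})\to 0$ on the channel footprints (one has to check the exponent carefully — the channel footprint has total measure $\alpha\eps^2$, which helps). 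Then the weighted upper-trace sum converges to $\frac{\alpha}{L}\int_{\Gamma_0} v(x_1)\fhi(x_1,0)\,dx_1$ using $v^\eps\weakto v$ in $L^2(I)$.

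Combining, for every $\fhi\in C(\R^2)$,
\begin{equation*}
  \int_{\bar\Gamma_0}\fhi\,dj_* = \lim_{\eps\to0}\frac1{L\eps^2}\int_{C_\eps}\del_2 u^\eps\,\fhi
  = \frac{\alpha}{L}\int_I \big(v(x_1)-u(x_1,0)\big)\fhi(x_1,0)\,dx_1,
\end{equation*}
which says exactly that $j_* = j(x_1)\,\calH^1|_{\Gamma_0}$ with $j = \frac{\alpha}{L}(v - u|_{\Gamma_0})\in L^1(I)$ (it is $L^1$, indeed $L^2$, since $v\in L^2(I)$ and $u|_{\Gamma_0}\in H^{1/2}(\Gamma_0)\subset L^2$). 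I expect the main obstacle to be the passage from the channel-top trace $u^\eps(\cdot,L\eps)$ to the strip-average $v^\eps$: one must be careful that the Poincar\'e estimate is applied on $S_\eps$ (not on the channels, where no useful bound on $\del_2 u^\eps$ in $L^2$ is available — recall the heuristics warn $L^2$-norms blow up there), and that the resulting $L^1$-bound, when restricted to the channel footprints of total measure $O(\eps^2)$ and then scaled by the $\eps^{-2}$-type prefactor, still tends to zero. A secondary technical point is the justification of horizontal traces and the FTC slice-by-slice for the $H^1$-function $u^\eps$, and the interchange of the $k$-sum with these slice identities; both are standard but should be stated. The weak-$*$ convergence of $|j^\eps|$ recorded after Lemma \ref{lem:currentintroduction} is not needed for this proposition but will be used later.
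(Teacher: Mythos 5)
Your overall architecture (fundamental theorem of calculus in the channels, reducing everything to the limits of the rescaled upper and lower channel-mouth traces) matches the paper's second calculation of its quantity $B_\eps$, and your treatment of the first half — testing $j^\eps$ against continuous $\fhi$ and controlling the variation of $\fhi$ across a channel — is fine. The genuine gap is in the step you yourself flag as delicate: the identification of
\begin{equation*}
  \lim_{\eps\to0}\ \frac{1}{L\eps^2}\sum_{k}\int_{k\eps}^{k\eps+\alpha\eps^3}u^\eps(x_1,0)\,\fhi(x_1,0)\,dx_1
  \qquad\text{and its analogue at }x_2=L\eps .
\end{equation*}
These are integrals of $u^\eps$ against measures of total mass $O(1)$ concentrated on the channel footprints, a set of Lebesgue measure $O(\eps^2)$. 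Strong convergence of the traces in $L^2(\Gamma_0)$ does \emph{not} suffice here: if $g^\eps\to g$ in $L^2(I)$ and $\mu_\eps=\eps^{-2}\one_{E_\eps}\,dx_1$ with $|E_\eps|=O(\eps^2)$, then $\int g^\eps\,d\mu_\eps$ need not converge to $\alpha\int g$, because the deviation $g^\eps-g$ (and even the oscillation of $g$ itself, for $g$ merely in $L^2$) can concentrate exactly on $E_\eps$; Cauchy--Schwarz only gives $\int_{E_\eps}|g^\eps-g|\le \|g^\eps-g\|_{L^2}\,O(\eps)$, i.e.\ an $o(\eps)$ bound where you would need $o(\eps^2)$. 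The same defect kills your Poincar\'e argument on $S_\eps$: it yields $\|u^\eps(\cdot,L\eps)-v^\eps\|_{L^2(I)}=O(\eps^{1/2})$, whence $\eps^{-2}\int_{\text{footprints}}|u^\eps(\cdot,L\eps)-v^\eps|\le O(\eps^{-2})\cdot O(\eps^{1/2})\cdot O(\eps)=O(\eps^{-1/2})$, which diverges.

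The missing ingredient is that one must use the \emph{equation}, not just trace and Poincar\'e estimates, to control $u^\eps$ pointwise near the channel mouths. The paper does this by unfolding: it averages $u^\eps$ over all cells meeting $[c,d]$ to obtain functions $U^\eps$ on a fixed reference cell, shows $U^\eps$ converges weakly in $L^2$ to constants $\xi_U,\xi_V$ identified with the averages of $u|_{\Gamma_0}$ and $v$, and then invokes Lemma A.1 of \cite{Lamacz-Schweizer-manyres-2017}: since $U^\eps$ solves a rescaled Helmholtz equation with homogeneous Neumann conditions, elliptic regularity gives uniform $H^2$-bounds, hence $C^0$-control via the embedding $H^2\subset C^0$ in two dimensions, and this is what legitimately compares the mean of $U^\eps$ over the shrinking channel mouth $[0,\alpha\eps^2]\times\{0\}$ with the bulk average $\xi_U$. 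Without some substitute for this regularity step, your passage from footprint averages to $\int_I(v-u)\fhi$ does not close.
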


\begin{proof}
  Once \eqref{eq:reljstarv}--\eqref{eq:geometricflowrule} are shown,
  the $L^1$-regularity of $j$ follows directly from the fact that
  $u(\cdot,0)$ and $v$ are of class $L^1(I)$.  We only have to prove
  \eqref{eq:reljstarv}--\eqref{eq:geometricflowrule}.

  Let $[c,d]\subset [0,a] = \bar I$ be an interval. Since $j_*$ is
  supported on $\bar\Gamma_0$, Proposition
  \ref{prop:geometricflowrule} is proved as soon as we can show that the
  limit measure $j_*$ satisfies
  \begin{equation}
    \label{eq:toshowgeomflowrule}
    \int_{[c,d]\times\{0\}}\,dj_*=\frac{\alpha}{L}\int_c^d (v(x_1)-u(x_1,0))\,dx_1\,.
  \end{equation}

  We will use the following function $\theta^\eps$ with large
  gradients:
  \begin{align}
    \label{eq:functionlargegradient}
    \theta^\eps(x_2):=
    \begin{cases}
      0\quad&\text{for } x_2\leq 0\,,\\
      x_2/(\eps L)\quad&\text{for } 0<x_2<\eps L\,,\\
      1\quad&\text{for } x_2\geq \eps L\,.
    \end{cases}
  \end{align}

  We want to use a localization function $\psi_\eps: [0,a]\to \R$. As
  a test function we then consider
  $\fhi^\eps(x_1,x_2) := \psi_\eps(x_1)\theta^\eps(x_2)$. The proof of
  \eqref {eq:toshowgeomflowrule} consists in calculating the quantity
  \begin{equation}
    \label{eq:defBeps}
    B_\eps:=\frac1\eps\int_{C_\eps}\nabla u^\eps\cdot \nabla\fhi^\eps
  \end{equation}
  in two different ways.

  As localization function $\psi_\eps$ we cannot use the
  characteristic function $\chi_{[c,d]} : [0,a] \to \{0,1\}$ of the
  interval $[c,d]$, since the jumps of this function can occur within
  a channel. We choose to consider all cells that touch the interval
  $[c,d]$: We define a set $\calK_\eps$ of indices as
  \begin{equation}
    \label{eq:defIeps}
    \calK_\eps :=
    \{k_1\in\Z\,|\, \eps k_1 \in [0,a-\eps]\text{ and }
    \left(k_1\eps,k_1\eps+\eps\right)\cap [c,d] \neq \emptyset\}\,.
  \end{equation}
  The number of elements of $\calK_\eps$ is of order
  $|\calK_\eps| = O(\eps^{-1})$.  We furthermore introduce
  $I^\eps_{c,d} := \bigcup_{k_1\in
    \calK_\eps}\left(k_1\eps, k_1\eps+\eps\right)$ and set
  \begin{equation}
    \psi_\eps(x_1) :=
    \begin{cases}
      1 \quad &\text{for }
      x_1 \in I^\eps_{c,d}\,,\\
      0  \quad &\text{else\,. }
    \end{cases}
  \end{equation}

  \smallskip \textit{First calculation of $B_\eps$.} We write $B_\eps$
  with the flux variable $j^\eps$ as
  \begin{align*}
    B_\eps
    &=\frac1\eps\int_{C_\eps}\nabla u^\eps\cdot \nabla\fhi^\eps
      =\frac{1}{\eps^2 L}\int_{C_\eps\cap\{x_1\in I^\eps_{c,d}\}} \del_2 u^\eps
      = \int_{\{x_1\in I^\eps_{c,d}\}} j^\eps \,d\calL^2\,.
  \end{align*}
  We claim that this implies, as $\eps\to 0$, 
  \begin{equation}\label{eq:claim1-Beps}
    B_\eps \to \int_{[c,d]\times\{0\}}\,dj_*\,.
  \end{equation}
  Indeed, for every $\delta>0$ and for every $\eps< \delta$,
  there holds, as $\eps\to 0$,
  \begin{align*}
    &\left| B_\eps - \int_{\{x_1 \in [c,d] \}}\, dj_* \right|
      \le \left| \int_{\{x_1\in (0,a) \cap (c-\delta , d+\delta)\} } j^\eps \, d\calL^2
      - \int_{\{x_1\in (c-\delta , d+\delta)\} }\, dj_* \right|\\
    &\qquad\qquad
      +  \int_{\{x_1\in (c-\delta,c)\cup (d,d+\delta)\} } J^\eps\, d\calL^2
      + \left| \int_{\{x_1\in (c-\delta,c)\cup (d,d+\delta)\} } dj_* \right|\\
    &\quad \to
      \int_{\{x_1\in (c,c+\delta)\cup (d-\delta,d)\} } dJ_*
      + \left| \int_{\{x_1\in (c,c+\delta)\cup (d-\delta,d)\} } dj_* \right|\,.
  \end{align*}
  By outer regularity of the Radon measures $J_*$ and $j_*$, the right
  hand side is arbitrarily small for small $\delta>0$. This verifies
  \eqref {eq:claim1-Beps}.

  \smallskip \textit{Second calculation of $B_\eps$.} The second
  calculation of $B_\eps$ is based on a quite elementary integration
  by parts: The integral of the derivative is given by the difference
  of values at top and bottom of the channels.

  To perform the calculation, we need some additional notation.
  Recall that the microscopic channels are defined as
  $C_\eps := \bigcup_{k_1 = 0}^{a/\eps - 1} (k_1 \eps, k_1 \eps +
  \alpha\eps^3)\times [0,L\eps]$. We define the union of the lower and
  upper channel boundaries in the interval $(c,d)$ as
  \begin{equation*}    
    \Gamma_\eps^U := \bigcup_{k_1\in \calK_\eps}
    (k_1\eps, k_1\eps + \alpha\eps^3)\times \{0\}
    \quad\text{ and }\quad
    \Gamma_\eps^V := \bigcup_{k_1\in \calK_\eps}
    (k_1\eps, k_1\eps + \alpha\eps^3)\times \{\eps L\}\,.
  \end{equation*}
  With this notation, an integration by parts provides
  \begin{align}
    \begin{split}
      \label{eq:Bepsfinal} B_\eps
      =\frac{1}{\eps^2 L}\int_{C_\eps\cap\{x_1\in I^\eps_{c,d}\}}\del_2 u^\eps
      =\frac{1}{\eps^2 L}\left(\int_{\Gamma_\eps^V}u^\eps-\int_{\Gamma_\eps^U}u^\eps\right)\,.
    \end{split}
  \end{align}
  It remains to determine the limit on the right hand side
  of \eqref{eq:Bepsfinal}. We will prove that
  \begin{align}
    \label{eq:meanvalueschannels1}
    &\lim_{\eps\to 0}\,\frac{1}{\eps^2} \int_{\Gamma_\eps^V}u^\eps
      = \alpha \int_{c}^d v(x_1)\,dx_1\,,\\
    \label{eq:meanvalueschannels2}
    &\lim_{\eps\to 0}\,\frac{1}{\eps^2}\int_{\Gamma_\eps^U}u^\eps
      = \alpha \int_{c}^d u(x_1,0)\,dx_1\,.
  \end{align}
  Once \eqref{eq:meanvalueschannels1}--\eqref {eq:meanvalueschannels2}
  is shown, the proof of the proposition is complete: together with
  \eqref {eq:claim1-Beps}, we obtain
  \begin{equation*}
    \int_{[c,d] \times\{0\}}\,dj_*
    =\lim_{\eps\to 0} B_\eps
    =\frac{\alpha}{L}\int_{c}^d(v(x_1)-u(x_1,0))\,dx_1\,.
  \end{equation*}
  Since $[c,d]$ was arbitrary, relations
  \eqref{eq:reljstarv}--\eqref{eq:geometricflowrule} are verified.

  \smallskip
  \textit{Verification of \eqref{eq:meanvalueschannels1}--\eqref
    {eq:meanvalueschannels2}.}  We consider the unit cell
  $Y:=(0,1)\times \left(-1,L+V\right)$ and the index set $\calK_\eps$
  of \eqref{eq:defIeps} and study the following averaged functions on
  $Y$:
  \begin{align*}
    U^\eps(y_1,y_2):=\frac{1}{|\calK_\eps|}
    \sum_{k_1\in \calK_\eps}u^\eps(\eps(k_1+y_1), \eps y_2)\quad
    \text{for }(y_1,y_2)\in Y\,.
  \end{align*}

  The (rescaled) channel in the periodicity cell $Y$ is
  $C^\eps_Y := (0,\alpha\eps^2)\times [0,L] \subset Y$. Its lower and
  upper boundary are the sets
  $\Gamma_Y^{\eps,U} := [0,\alpha\eps^2]\times\{0\}$ and
  $\Gamma_Y^{\eps,V}:= [0,\alpha\eps^2]\times\{L\}$.  The domain below
  the channel is $Y_U := (0,1)\times (-1,0)$, the domain above the
  channel is $Y_V := (0,1)\times (L,L+V)$.

  Our first aim is to prove that the restrictions $U^\eps|_{Y_U}$ and
  $U^\eps|_{Y_V}$ converge (weakly in $L^2$) to constant functions.
  
  Using Jensen's inequality and the fact that
  $|\calK_\eps|^{-1} = \eps/(d-c) + O(\eps^2)$, we find
  \begin{align*}
    \int_{Y_U}|U^\eps(y)|^2\,dy
    &\leq
      \frac{1}{|\calK_\eps|}\sum_{k_1\in \calK_\eps}
      \int_{Y_U}|u^\eps(\eps(y_1+k_1),\eps y_2)|^2\,dy\\
    &= \frac{1}{\eps^2|\calK_\eps|}\sum_{k_1\in
      \calK_\eps}\int_{\left(k_1\eps,k_1\eps + \eps \right)\times
      \left(-\eps,0\right)}|u^\eps(x)|^2\,dx\\
    &\leq \frac{1}{\eps^2}\left(\frac{\eps}{d-c}+O(\eps^2)\right)\int_{(c,d)\times
      (-\eps,0)}|u^\eps(x)|^2\,dx\\
    &\leq \left(\frac{1}{d-c}+O(\eps)\right)\left(\frac1\eps\int_{(c,d)\times
      (-\eps,0)}|u^\eps(x)|^2\,dx\right)\leq C\,,
  \end{align*}
  where in the last step we exploited the boundedness of $u^\eps$ in
  $H^1(\Omega_0)$: The second bracket in the last line converges to
  the $L^2$-norm of the trace of $u^\eps$ on $\Gamma_0$. We have
  obtained that the sequence $U^\eps|_{Y_U}$ is uniformly bounded in
  $L^2(Y_U)$.

  Regarding $U^\eps|_{Y_V}$ we perform the same calculation and use,
  in the last step,
  \begin{align*}
    &\int_{Y_V}|U^\eps(y)|^2\,dy
    \leq C \frac1\eps\int_c^d \int_{L\eps}^{(L+V)\eps} |u^\eps(x)|^2\,dx\\
    &\qquad \leq 2C \frac1\eps\int_c^d \int_{L\eps}^{(L+V)\eps} |u^\eps(x) - v^\eps(x_1)|^2\,dx
      + 2C \frac1\eps\int_c^d \int_{L\eps}^{(L+V)\eps} |v^\eps(x_1)|^2\,dx\\
    &\qquad \leq O(\eps) + 2C V \int_c^d |v^\eps(x_1)|^2\,dx_1 \le C\,,
  \end{align*}
  where we used the one-dimensional Poincar\'e (also called
  Poincar\'e-Wirtinger) inequality with averages $v^\eps(x_1)$ for the
  first integral, exploiting that the domain size is $V\eps$. In the
  last estimate we used the boundedness of $v^\eps$ in $L^2((0,a))$
  that was assumed in Assumption \ref {ass:v-w}.

  The above estimates allow to proceed with the weak $L^2$-compactness
  of bounded sequences.  There exist limit functions $U$ and $V$ such
  that, up to a subsequence, $U^\eps|_{Y_U}\weakto U$ in
  $L^2\left(Y_U\right)$ and $U^\eps|_{Y_V}\weak V$ in
  $L^2\left(Y_V\right)$ as $\eps\to 0$. It is not difficult to verify
  that the limit functions $U$ and $V$ are constant functions. Indeed,
  the gradient of $U^\eps|_{Y_U}$ satisfies, in the limit $\eps\to 0$,
  \begin{align*}
    \int_{Y_U}|\nabla U^\eps(y)|^2\,dy
    &\leq\frac{\eps^2}{|\calK_\eps|}\sum_{k_1\in \calK_\eps} \int_{Y_U}|\nabla
                   u^\eps(\eps(y_1+k_1),\eps y_2)|^2\,dy\\
    &= \frac{1}{|\calK_\eps|}\sum_{k_1\in
         \calK_\eps}\int_{\left(k_1\eps,k_1\eps + \eps\right)\times
          \left(-\eps,0\right)}|\nabla u^\eps(x)|^2\,dx\\
    &\leq  \left(\frac{\eps}{d-c}+O(\eps^2)\right)\int_{(c,d)\times
        (-\eps,0)}|\nabla u^\eps(x)|^2\,dx\to 0\,,
  \end{align*}
  since $u^\eps$ is bounded in $H^1(\Omega_\eps)$. Analogously,
  $\|\nabla U^\eps\|_{L^2\left(Y_V\right)}\to 0$ and we obtain
  $\nabla U = \nabla V = 0$. As a consequence, for two real numbers
  $\xi_U, \xi_V \in \R$, the constant functions are $U\equiv \xi_U$
  and $V\equiv \xi_V$.

  In our next step we identify the constants $\xi_U$ and
  $\xi_V$. There holds
  \begin{align*}
    \xi_U\leftarrow
    &\int_{Y_U}U^\eps(y)\,dy=\frac{1}{|\calK_\eps|}\sum_{k_1\in \calK_\eps}
      \int_{Y_U} u^\eps(\eps(y_1+k_1,\eps y_2)\,dy\\
    =&\frac{1}{\eps^2|\calK_\eps|}\sum_{k_1\in \calK_\eps}
       \int_{\left(k_1\eps,k_1\eps + \eps\right)\times \left(-\eps,0\right)}u^\eps(x_1,x_2)\,dx\\
    =&\left(\frac{1}{d-c}+O(\eps)\right)\left(\frac{1}{\eps}
       \int_{I^\eps_{(c,d)}\times(-\eps,0)}u^\eps(x_1,x_2)\,dx\right)\\
    \rightarrow\,&\frac{1}{d-c}\int_{c}^d u(x_1,0)\,dx_1\,.
  \end{align*}
  Analogously, using definition \eqref{eq:v-eps} of $v^\eps$ and the
  weak convergence $v_\eps\weakto v$ in $L^2(I)$,
  \begin{align*}
    V\, \xi_V
    &\leftarrow \int_{Y_V}U^\eps(y)\,dy=\frac{1}{|\calK_\eps|}\sum_{k_1\in \calK_\eps}
      \int_{Y_V} u^\eps(\eps(y_1+k_1,\eps y_2)\,dy\\
    &=\left(\frac{1}{d-c}+O(\eps)\right)V\left(\frac{1}{V\eps}
      \int_{I^\eps_{(c,d)}\times(\eps L,\eps(L+V))}u^\eps(x)\,dx\right)\\
    &=\left(\frac{V}{d-c}+O(\eps)\right)\int_{I^\eps_{(c,d)}}v^\eps(x_1)\,dx_1\\
    &\rightarrow \frac{V}{d-c}\int_c^dv(x_1)\,dx_1\,.
  \end{align*}
  We have found
  \begin{equation}
    \label{eq:identifyxi}
    \xi_U = \frac{1}{d-c}\int_{c}^d u(x_1,0)\,dx_1\quad\text{and}\quad
    \xi_V = \frac{1}{d-c}\int_c^d v(x_1)\,dx_1\,.
  \end{equation}

  At this point, we identified the averages of $U^\eps$ (below and
  above the channel) with $u$ and $v$. In order to check \eqref
  {eq:meanvalueschannels1}--\eqref {eq:meanvalueschannels2}, it
  remains to relate averages of $U^\eps$ in the bulk areas to averages
  of $U^\eps$ in the ends of the channel. This can be done with a
  Lemma that was proved and used in \cite
  {Lamacz-Schweizer-manyres-2017}.

  We use Lemma A.1 of \cite {Lamacz-Schweizer-manyres-2017} with
  slightly adapted notation. The obstacle in the single cell $Y$ is
  given by
  $\Sigma_Y^\eps := Y\setminus (\overline{Y_U} \cup\overline{Y_V}
  \cup\overline{C_Y^\eps})$. We furthermore assume only boundedness of
  the gradients of $U^\eps$ and not the $L^2$-boundedness everywhere
  (also in the channel). An inspection of the proof in \cite
  {Lamacz-Schweizer-manyres-2017} shows that this is sufficient.

  The essential part of the proof is the following: With the
  tangential vector in each channel being $e_2$, one considers the
  functions $V^\eps := \del_2 U^\eps$. These functions solve the same
  Helmholtz equation and they satisfy homogeneous boundary conditions:
  Dirichlet conditions on one part of the boundary, Neumann conditions
  on the other. This allows to multiply the equation for $V^\eps$ with
  $V^\eps$. One finds uniform $H^1$-estimates for $V^\eps$ which yield
  uniform $H^2$-estimates for $U^\eps$.  The embedding
  $H^2(Y_U) \subset C^0(Y_U)$ (accordingly for $Y_V$) allows to
  compare point values of $U^\eps$ with averages of $U^\eps$.

  \begin{lemma}[Adaption of Lemma A.1 from \cite
    {Lamacz-Schweizer-manyres-2017}]
    Let $U^\eps:Y\setminus \overline{\Sigma_Y^\eps}\to\R$ be a family
    of $H^1$-functions such that the $L^2$-norms of $\nabla U^\eps$
    are bounded. We assume that every $U^\eps$ solves the Helmholtz
    equation
    \begin{align*}
      -\Delta U^\eps&=\omega^2\eps^2 U^\eps\quad
                      \text{in } Y\setminus \overline{\Sigma_Y^\eps}\,,\\
      \del_n U^\eps&=0\quad\text{on } \del\Sigma_Y^\eps\,. 
    \end{align*}
    No boundary conditions are imposed on $\del Y$. Assume that
    \begin{align*}
      &U^\eps|_{Y_U}\weakto\xi_U\quad\text{in }L^2(Y_U)\,,\\ 
      &U^\eps|_{Y_V}\weakto\xi_V\quad\text{in }L^2(Y_V)\,,
    \end{align*}
    as $\eps\to 0$. Then
    \begin{align}
      \label{eq:resultA1}
      \mean_{\Gamma_Y^{\eps,U}}U^\eps(y)\,d\mathcal{H}^1(y)\to \xi_U\quad\text{and}
      \quad \mean_{\Gamma_Y^{\eps,V}}V_\eps(y)\,d\mathcal{H}^1(y)\to \xi_V\,.
    \end{align}
  \end{lemma}

  The Lemma can indeed be applied. (a) $U^\eps$ solves the (rescaled)
  Helmholtz equation with Neumann boundary condition since $u^\eps$
  satisfies the (non-rescaled) system.  (b) The $L^2$-boundedness of
  $\nabla U^\eps$ follows easily from $H^1$-boundedness of $u^\eps$
  (compare the calculations above in this proof). (c) The weak $L^2$
  limits $\xi_U$ and $\xi_V$ have been verified above.

  \smallskip With the result of the lemma at hand, it only remains to
  compare the limits in \eqref {eq:meanvalueschannels1}--\eqref
  {eq:meanvalueschannels2} with the limits in \eqref{eq:resultA1}. We
  calculate with
  $\Gamma_Y^{\eps,U}:=\overline{C^\eps_Y}\cap
  \overline{Y_U}=[0,\alpha\eps^2]\times\{0\}$:
  \begin{align*}
    &\mean_{\Gamma_Y^{\eps,U}}U^\eps(y)\,d\mathcal{H}^1(y)
    = \frac{1}{\alpha\eps^2} \int_{\Gamma_Y^{\eps,U}}\frac{1}{|\calK_\eps|}
      \sum_{k_1\in \calK_\eps}u^\eps(\eps(y_1+k_1,\eps y_2)\,d\mathcal{H}^1(y)\\
    &\qquad
      =\frac{1}{\alpha\eps^2}\frac{1}{|\calK_\eps|}
      \frac1\eps\int_{\Gamma_\eps^{U}}u^\eps(x)\,d\mathcal{H}^1(x)
      =\left(\frac{1}{\eps^2\alpha(d-c)} + O\left(\frac{1}{\eps}\right)\right)
      \int_{\Gamma_\eps^{U}}u^\eps(x)\,d\mathcal{H}^1(x)\,.
  \end{align*}
  The results \eqref{eq:resultA1} and \eqref{eq:identifyxi} thus imply
  \begin{align*}
    \frac{1}{\eps^2}\int_{\Gamma_\eps^U}u^\eps(x)\,d\mathcal{H}^1(x)
    &\to \alpha (d-c) \xi_U = \alpha \int_c^d u(x_1,0)\,dx_1\,,
  \end{align*}
  which is the claim \eqref{eq:meanvalueschannels2}. The limit
  \eqref{eq:meanvalueschannels1} is obtained in an analogous way.
\end{proof}

It remains to derive a further relation between $u$, $v$, and $j$. We
obtain a relation from mass conservation in the resonator volume: The
flux through the channels (and hence the density $j$) can be expressed
in terms of $v$.

\begin{proposition}[Mass conservation]
  \label{prop:B}
  Let $j$ be as in Proposition \ref{prop:geometricflowrule} and $v$ as
  in Assumption \ref{ass:v-w}. Then
  \begin{equation}
    \label{eq:densityjstar-distribution}
    j = V \del_1^2 v + V \omega^2\, v
  \end{equation}
  in the sense of distributions. Furthermore, for every
  $\psi\in C^1(\bar I)$,
  \begin{equation}
    \label{eq:densityjstar1}
    \int_I j(x_1) \psi(x_1)\, dx_1 =
    - \int_I V \del_1 v(x_1) \del_1 \psi(x_1)\, dx_1
    + \int_I V \omega^2 v(x_1) \psi(x_1) \, dx_1\,,
  \end{equation}
  and $v$ has the regularity $\del^2_1 v\in L^1(I)$.
\end{proposition}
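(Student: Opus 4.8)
The plan is to test the weak form of the Helmholtz equation on $\Omega_\eps$ (where $f\equiv 0$ on $S_\eps\cup C_\eps$) against $\fhi^\eps(x_1,x_2):=\psi(x_1)\,\theta^\eps(x_2)$ with $\psi\in C^1(\bar I)$ and $\theta^\eps$ the steep cut-off of \eqref{eq:functionlargegradient}. Since $\fhi^\eps$ vanishes for $x_2\le 0$ it is supported in $S_\eps\cup C_\eps$, so the source term drops out and
\[
  \int_{S_\eps}\nabla u^\eps\cdot\nabla\fhi^\eps-\omega^2\int_{S_\eps}u^\eps\fhi^\eps
  +\int_{C_\eps}\nabla u^\eps\cdot\nabla\fhi^\eps-\omega^2\int_{C_\eps}u^\eps\fhi^\eps=0 .
\]
On $S_\eps$ one has $\theta^\eps\equiv1$, hence $\nabla\fhi^\eps=(\del_1\psi,0)$, and Fubini together with \eqref{eq:v-eps} turns the first two terms into $\eps V\big(\int_I\del_1 v^\eps\,\del_1\psi-\omega^2\int_I v^\eps\,\psi\big)$. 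On $C_\eps$ we have $\nabla\fhi^\eps=(\theta^\eps\,\del_1\psi,\,\psi\,(\theta^\eps)')$ with $(\theta^\eps)'\equiv(\eps L)^{-1}$ on $C_\eps$; here the $\del_1 u^\eps$-term is $o(\eps)$ by \eqref{eq:assdel1ueps}, the zeroth order term is $o(\eps)$ because $\|u^\eps\|_{L^1(C_\eps)}\le\|u^\eps\|_{L^2(\Omega_\eps)}|C_\eps|^{1/2}=O(\eps^{3/2})$, and the remaining term is $\eps\cdot\frac1{L\eps^2}\int_{C_\eps}\del_2 u^\eps\,\psi$, which by Lemma~\ref{lem:currentintroduction} and Proposition~\ref{prop:geometricflowrule} equals $\eps\int_I j\,\psi+o(\eps)$. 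Dividing by $\eps V$ we obtain, for every $\psi\in C^1(\bar I)$,
\begin{equation}\label{eq:plan-dagger}
  \int_I\del_1 v^\eps\,\del_1\psi-\omega^2\int_I v^\eps\,\psi+\frac1V\int_I j\,\psi=o(1)\qquad(\eps\to0).
\end{equation}

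Now I pass to the limit. For $\psi\in C_c^\infty(I)$ the hypothesis $v^\eps\weakto v$ in $L^2(I)$ yields $v^\eps\to v$ in $\calD'(I)$, hence $\int_I\del_1 v^\eps\,\del_1\psi=-\int_I v^\eps\,\del_1^2\psi\to-\int_I v\,\del_1^2\psi=\int_I\del_1 v\,\del_1\psi$ (using $v\in H^1(I)$). Inserting this and $v^\eps\weakto v$ into \eqref{eq:plan-dagger} gives $-\del_1^2 v=\omega^2 v-\tfrac1V j$ in $\calD'(I)$, which is \eqref{eq:densityjstar-distribution}. Since $j=\tfrac\alpha L(v-u(\cdot,0))\in L^1(I)$ by Proposition~\ref{prop:geometricflowrule} and $v\in H^1(I)\subset L^1(I)$, the right-hand side lies in $L^1(I)$, so $\del_1^2 v\in L^1(I)$ and $v\in W^{2,1}(I)\hookrightarrow C^1(\bar I)$.

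It remains to upgrade \eqref{eq:plan-dagger} to the full weak form \eqref{eq:densityjstar1} for arbitrary $\psi\in C^1(\bar I)$, for which I must justify $\int_I\del_1 v^\eps\,\del_1\psi\to\int_I\del_1 v\,\del_1\psi$ without the support restriction. I claim $\|\del_1 v^\eps\|_{L^\infty(I)}\le C$. Indeed, integrating $-\Delta u^\eps=\omega^2 u^\eps$ over $S_\eps$ in $x_2$ and using the Neumann condition on $\Gamma_\eps$ gives $\del_1^2 v^\eps(x_1)=-\omega^2 v^\eps(x_1)+\tfrac1{\eps V}\,\del_2 u^\eps(x_1,L\eps)$, where $\del_2 u^\eps(\cdot,L\eps)$ is supported on the channel mouths; estimating this channel-mouth flux by the volume quantity of \eqref{eq:assdel2ueps} through an elliptic trace estimate in the channels — legitimate because $\del_2 u^\eps$ is essentially $x_2$-independent inside a channel, in the spirit of the Lemma~A.1 argument used above — one obtains $\|\del_1^2 v^\eps\|_{L^1(I)}\le C$. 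The lateral Neumann condition $\del_1 u^\eps=0$ on $\{0\}\times(L\eps,(L+V)\eps)$ gives $\del_1 v^\eps(0)=0$, so $|\del_1 v^\eps(x_1)|=\big|\int_0^{x_1}\del_1^2 v^\eps\big|\le\|\del_1^2 v^\eps\|_{L^1(I)}\le C$, proving the claim. Consequently $(\del_1 v^\eps)$ is bounded in $W^{1,1}(I)$, hence precompact in every $L^q(I)$ with $q<\infty$, and by uniqueness of distributional limits its limit is $\del_1 v$; passing to the limit in \eqref{eq:plan-dagger} now gives exactly \eqref{eq:densityjstar1}. Finally, since \eqref{eq:densityjstar1} holds for all $\psi\in C^1(\bar I)$ with no boundary term, integrating $\int_I\del_1 v\,\del_1\psi$ by parts (allowed as $v\in W^{2,1}(I)$) and comparing with \eqref{eq:densityjstar-distribution} forces $\del_1 v(0)=\del_1 v(a)=0$; thus \eqref{eq:densityjstar1} indeed also encodes the homogeneous Neumann condition.

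The main obstacle is the bound $\|\del_1^2 v^\eps\|_{L^1(I)}\le C$: this is the step that converts the volume control \eqref{eq:assdel2ueps} on $\del_2 u^\eps$ into control of the flux through the channel mouths, and it genuinely uses the elliptic regularity of $u^\eps$ inside the thin channels rather than the mere $H^1(\Omega_\eps)$-bound. Everything else is either routine estimation or a reuse of Lemma~\ref{lem:currentintroduction} and Proposition~\ref{prop:geometricflowrule}.
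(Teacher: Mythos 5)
Your derivation of the $\eps$-level identity and the passage to the limit for $\psi\in C_c^\infty(I)$ are correct and coincide with the paper's argument: test with $\psi\,\theta^\eps$, kill the channel contributions of $\del_1 u^\eps$ and of the zeroth-order term by \eqref{eq:assdel1ueps} and $|C_\eps|=O(\eps^3)$, and identify the remaining channel term via Lemma \ref{lem:currentintroduction} and Proposition \ref{prop:geometricflowrule}. This gives \eqref{eq:densityjstar-distribution} in $\calD'(I)$ and, since $j\in L^1(I)$, the regularity $v\in W^{2,1}(I)$. Up to here your proof is sound.

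The gap is in the upgrade to $\psi\in C^1(\bar I)$, i.e.\ precisely the step that encodes the Neumann condition $\del_1 v=0$ on $\del I$. You rest everything on the claim $\|\del_1^2 v^\eps\|_{L^1(I)}\le C$, justified by ``an elliptic trace estimate'' and the assertion that $\del_2 u^\eps$ is ``essentially $x_2$-independent'' in a channel. As written this is not a proof. First, the pointwise trace $\del_2 u^\eps(\cdot,L\eps)$ on the channel mouths is not an $L^1$ function controlled by the volume integral \eqref{eq:assdel2ueps} via elliptic regularity (there are reentrant corners at the mouths, and \eqref{eq:assdel2ueps} is only an $L^1$ bound in the bulk of the channel); what is actually true is that the \emph{total flux} through a cross-section is nearly independent of the height, which follows from the divergence theorem together with the Neumann condition on the channel side walls, with an error $\omega^2\|u^\eps\|_{L^1(\text{channel})}$ that has to be tracked. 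Second, even granting that, $\del_1^2 v^\eps$ is not a bounded measure plus $-\omega^2 v^\eps$: testing the equation with $\psi\,\theta^\eps$ for $\psi\in C_c^\infty$ shows that $\del_1^2 v^\eps$ also contains the term $(\eps V)^{-1}\del_1\bigl(\theta^\eps\!\int\!\del_1 u^\eps\,dx_2\bigr)$ coming from the channels, which is a first-order distribution (small in the appropriate norm by \eqref{eq:assdel1ueps}, but it must be separated off before you can integrate $\del_1^2 v^\eps$ from $x_1=0$ and conclude $\|\del_1 v^\eps\|_{L^\infty}\le C$). None of this is fatal, but it is exactly the ``main obstacle'' you flag, and you have not closed it.

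It is worth noting that the paper avoids this obstacle entirely, and you could too: take $\psi\in C^2(\bar I)$ with $\del_1\psi=0$ at both endpoints, and integrate by parts in $x_1$ over the strip \emph{before} passing to the limit, so that $\frac1\eps\int_{S_\eps}\del_1 u^\eps\,\del_1\psi=-\frac1\eps\int_{S_\eps}u^\eps\,\del_1^2\psi\to -V\int_I v\,\del_1^2\psi$ uses only the assumed weak convergence $v^\eps\weakto v$ and no bound whatsoever on $\del_1 v^\eps$. This yields \eqref{eq:densityjstar1-267} for a class of test functions that do \emph{not} vanish at $\del I$; one then integrates by parts once at the limit level (legitimate since $v\in W^{2,1}(I)$, and without boundary terms since $\del_1\psi$ vanishes at the endpoints) and extends to all of $H^1(I)\supset C^1(\bar I)$ by density. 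The Neumann condition for $v$ then drops out of the comparison with \eqref{eq:densityjstar-distribution}, with no a priori estimate on $\del_1 v^\eps$ needed.
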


\begin{proof}
  We fix $\psi\in C^2(\bar I) = C^2([0,a])$ with
  $\del_1 \psi(0) = \del_1 \psi(a) = 0$. We use $\theta^\eps$ of
  \eqref{eq:functionlargegradient} and consider
  $\fhi^\eps(x_1,x_2) := \psi(x_1) \theta^\eps(x_2)$ in equation
  \eqref{eq:weakformweps} for $w^\eps$.  The left hand side vanishes
  since $\fhi^\eps$ vanishes in $\Omega_0$. There remains
  \begin{equation}
    \label{eq:weakformsimple}
    \frac{1}{\eps} \int_{S_\eps \cup C_\eps} \nabla u^\eps\cdot \nabla\fhi^\eps
    =\frac{\omega^2}{\eps} \int_{S_\eps \cup C_\eps}  u^\eps\, \fhi^\eps\,.
  \end{equation}
  
  {\em Left hand side of \eqref{eq:weakformsimple}.} We calculate
  \begin{align*}
    \frac{1}{\eps} \int_{S_\eps \cup C_\eps} \nabla u^\eps\cdot \nabla\fhi^\eps
    &= \frac{1}{\eps} \int_{ C_\eps} \del_1 u^\eps(x_1,x_2) \del_1\psi(x_1)\,\frac{x_2}{\eps L}
      + \frac{1}{\eps} \int_{S_\eps} \del_1 u^\eps(x_1,x_2) \del_1\psi(x_1)\\
    &\quad + \frac{1}{\eps} \int_{ C_\eps}\del_2 u^\eps\frac{1}{L\eps}\psi(x_1)\,.
  \end{align*}
  Since $\del_1 u^\eps$ satisfies \eqref{eq:assdel1ueps}, the first
  integral vanishes in the limit $\eps\to 0$. For the second term we
  find
  \begin{equation*}
    \frac{1}{\eps} \int_{S_\eps} \del_1 u^\eps(x_1,x_2) \del_1\psi(x_1)
    =-\frac{1}{\eps}\int_{S_\eps} u^\eps(x_1,x_2) \del^2_1\psi(x_1)
    \to- V\int_I v(x_1)\del^2_1\psi(x_1)\,dx_1
  \end{equation*}
  by the weak convergence $v^\eps \weakto v$ in $L^1(I)$.  For the
  last term we exploit Lemma \ref{lem:currentintroduction}, which
  ensures that
  \begin{equation*}
    \frac{1}{\eps} \int_{ C_\eps}\del_2 u^\eps\frac{1}{L\eps}\psi(x_1)
    =\int_{\R^2}j^\eps\psi(x_1)\to \int_{\Gamma_0}\psi(x_1)\,dj_*(x)
    =\int_I \psi(x_1)j(x_1)\,dx_1\,. 
  \end{equation*}
  
  {\em Right hand side of \eqref{eq:weakformsimple}.} We obtain
  \begin{align*}
    \frac{\omega^2}{\eps} \int_{S_\eps \cup C_\eps}  u^\eps\, \fhi^\eps
    &=\frac{\omega^2}{\eps}
      \left(\int_{C_\eps}  u^\eps(x_1,x_2)\, \psi(x_1)\frac{x_2}{\eps L} + 
      \int_{S_\eps}  u^\eps(x_1,x_2)\, \psi(x_1)\right)\\
    &\to V\omega^2\int_{I}v(x_1)\psi(x_1) 
  \end{align*}
  as $\eps\to 0$, where we used the convergence of averages in
  $S_\eps$ to $v$ and, for the first term,
  $\eps^{-1} \int_{C_\eps} | u^\eps | \le \eps^{-1} \| u^\eps \|_{L^2}
  |C_\eps|^{1/2} \le C \eps^{1/2} \to 0$.  We obtain from
  \eqref{eq:weakformsimple}
  \begin{equation}
    \label{eq:densityjstar1-267}
    \int_I j(x_1) \psi(x_1)\, dx_1 =
     \int_I V v(x_1) \del_1^2 \psi(x_1)\, dx_1
    + \int_I V \omega^2 v(x_1) \psi(x_1) \, dx_1\,.
  \end{equation}

  Relation \eqref {eq:densityjstar1-267} provides \eqref
  {eq:densityjstar-distribution}. In particular, the distribution
  $\del_1^2 v$ is expressed by the $L^1$-functions $v$ and $j$
  (compare Proposition \ref{prop:geometricflowrule}). We therefore
  find $v\in W^{2,1}(I)$.

  Relation \eqref {eq:densityjstar1} follows with another integration
  by parts from \eqref {eq:densityjstar1-267}. The set of test
  functions is dense in $H^1(I)$, hence \eqref {eq:densityjstar1}
  holds for all $\psi\in H^1(I)$ and, in particular, for all
  $\psi\in C^1(\bar I)$.
\end{proof}

We can now formally conclude the proof of our main theorem.

\begin{proof}[Proof of Theorem \ref{thm:mainTheorem}.] Equation \eqref
  {eq:main-w-weak} for $w$ was checked in Proposition \ref{prop:A}.
  Due to Propositions \ref{prop:B} and \ref{prop:geometricflowrule} we
  find
  \begin{align*}
    V(\del^2_1 v(x_1)+\omega^2 v(x_1))=j(x_1)=\frac{\alpha}{L}(v(x_1)-u(x_1,0))
  \end{align*}
  in the sense of distributions.  Re-ordering terms, we may write this
  relation equivalently as
  \begin{align*}
    -\del_1^2 v(x_1)+\left(\frac{\alpha}{LV}-\omega^2\right)v(x_1)
      =\frac{\alpha}{LV}u(x_1,0)\,,
  \end{align*} 
  which is relation \eqref{eq:limiteq-v}.  For a test function
  $\psi\in C^1(\bar I)$, the distribution $\del_1^2 v$ can be
  integrated by parts once without boundary terms. We therefore have
  obtained also \eqref{eq:limiteq-v-weak}, which encodes additionally
  the homogeneous Neumann boundary condition for $v$.
\end{proof}

\subsection*{Acknowledgement} This work was initiated when the three
authors attended in summer 2019 workshop 1931 ``Computational
Multiscale Methods'' in Oberwolfach. The invitation and the
hospitality of the institute are gratefully acknowledged.

\bibliographystyle{abbrv}
\bibliography{lit-Helmholtz}

\end{document}